\listfiles
\documentclass[1p]{elsarticle}

\usepackage{amsmath}
\usepackage{amsthm}
\usepackage{amssymb}
\usepackage{graphicx}
\usepackage{enumerate}
\usepackage{environ}
\usepackage{mathtools}
\RequirePackage[]{algorithm2e}
\usepackage{calc}
\usepackage{tikz,pgfplots}
\usepackage{todonotes}
\usepackage{multicol}
\usepackage[utf8x]{inputenc}
\usepackage{hyperref}

\geometry{margin=1in}

\newtheorem{theorem}{Theorem}[section]
\newtheorem{proposition}[theorem]{Proposition}

\newtheorem{corollary}[theorem]{Corollary}

\theoremstyle{definition}
\newtheorem{definition}[theorem]{Definition}
\newtheorem{example}[theorem]{Example}
\newtheorem{problem}[theorem]{Problem}

\theoremstyle{remark}
\newtheorem{remark}[theorem]{Remark}

\numberwithin{equation}{section}

\setcounter{tocdepth}{1}

\newcommand{\comment}[1]{}

%%%%%%%%%%%%%%%%%%%%%%%
%% Elsevier bibliography styles
%%%%%%%%%%%%%%%%%%%%%%%
%% To change the style, put a % in front of the second line of the current style and
%% remove the % from the second line of the style you would like to use.
%%%%%%%%%%%%%%%%%%%%%%%

%% `Elsevier LaTeX' style, distributed in TeX Live 2019
% \bibliographystyle{elsarticle-num}
 \bibliographystyle{elsarticle-harv}\biboptions{authoryear}
%%%%%%%%%%%%%%%%%%%%%%%

\begin{document}

\begin{frontmatter}

\title{$\lambda$--Core Distance Partitions}

%% Group authors per affiliation:
\author{Xandru Mifsud\corref{mycorrespondingauthor}}
\address{Faculty of Science, University of Malta, Msida, Malta}
\ead{xmif0001@um.edu.mt}

\begin{abstract}
The $\lambda$--core vertices of a graph correspond to the non--zero entries of some eigenvector of $\lambda$ for a universal adjacency matrix $\mathbf{U}$ of the graph. We define a partition of the vertex set $V$ based on the $\lambda$--core vertex set and its neighbourhoods at a distance $r$, and give a number of results relating the structure of the graph to this partition. For such partitions, we also define an entropic measure for the information content of a graph, related to every distinct eigenvalue $\lambda$ of $\mathbf{U}$, and discuss its properties and potential applications. 
\end{abstract}

\begin{keyword}
Universal Adjacency Matrices\sep Core Vertices\sep Graph Labelling \sep Information Content
\MSC[2010] 05C50\sep  15A18
\end{keyword}

\end{frontmatter}

\begin{center}
    Published in \textit{Linear Algebra and Its Applications}, \href{url:https://doi.org/10.1016/j.laa.2020.12.012}{https://doi.org/10.1016/j.laa.2020.12.012}
\end{center}

\section{Introduction}

\subsection{Nomenclature}

An undirected graph $G = (V, E)$ has a finite vertex set $V = \{v_1,\dots, v_n\}$ and an edge set $E$ of 2--element subsets of $V$. We use $n$ exclusively to denote the number of vertices. The graphs we consider are connected and without loops or multi--edges. We predominantly use the definitions of {\cite{CvRowSim}}, with notation adapted accordingly. 

The adjacency matrix of a graph $G$ is the $n \times n$ matrix $\mathbf{A} = (a_{ij})$ such that $a_{ij} = 1$ if the vertices $v_i$ and $v_j$ are adjacent and $a_{ij} = 0$ otherwise. The degree matrix is the $n \times n$ matrix $\mathbf{D} = \text{diag}\left(\rho(v_1),\dots, \rho(v_n)\right)$, where $\rho(v)$ is the number of edges incident to $v$. Let $\mathbf{I}$ and $\mathbf{J}$ denote the $n \times n$ identity and all-ones matrices, respectively. {\cite{UnivHaem}} defined the universal adjacency matrix $\mathbf{U} = \gamma_A \mathbf{A} + \gamma_D \mathbf{D} + \gamma_I \mathbf{I} + \gamma_J \mathbf{J}$, for some scalars $\gamma_A \neq 0, \gamma_D, \gamma_I$ and $\gamma_J$. Depending on the choice of parameters, $\mathbf{U}$ encodes a number of well--known and important matrices in graph theory, most notably the adjacency and Laplacian matrices.

The eigenvalue spectrum of a symmetric $n \times n$ matrix $\mathbf{M}$ is a multi--set denoted by ${\rm spec}(\mathbf{M})$, where for $\lambda^{\left(m_{\mathbf{M}}(\lambda)\right)} \in {\rm spec}(\mathbf{M})$, $\lambda$ is the eigenvalue and $m_{\mathbf{M}}(\lambda)$ is the multiplicity of $\lambda$ in the spectrum. For brevity, we shall (with some abuse of notation) simply write $\lambda \in {\rm spec}(\mathbf{M})$.

A vertex $v_i$ of a graph $G$ is a $\lambda$--core vertex for $\mathbf{U}$ if there exists an eigenvector $\mathbf{x}$ of $\mathbf{U}$ such that $\mathbf{Ux} = \lambda\mathbf{x}$ and the $i^{\rm th}$ entry of $\mathbf{x}$ is non--zero. Otherwise, $v_i$ is said to be a $\lambda$--core--forbidden vertex. The sets of $\lambda$--core and $\lambda$--core--forbidden vertices for $\mathbf{U}$ are denoted by $CV_\lambda$ and $CFV_\lambda$ respectively. Diagrammatically, we draw \tikz{\node [fill={rgb,255: red,0; green,172; blue,172}, draw=black, shape=circle, scale=0.6] {v}} to represent $\lambda$--core vertices, \tikz{\node [fill=white, draw=black, shape=circle, scale=0.6] {u}} for vertices which have a $\lambda$--core neighbour but are not $\lambda$--core themselves, and \tikz{\node [fill={rgb,255: red,183; green,183; blue,183}, draw=black, shape=circle, scale=0.6] {w}} otherwise.

The terms \textit{core} and \textit{core--forbidden} vertices were first introduced by {\cite{SciCVDefn}} for the kernel of $\mathbf{A}$. Similar concepts exist: for example, in linear algebra the core--forbidden vertices that increase the multiplicity of $\lambda$ are termed as \textit{Parter} vertices (after the work of S. Parter during the 1960s), while those that leave the multiplicity unchanged are called \textit{neutral} vertices. The core vertices are termed as \textit{downer} vertices, since they decrease the multiplicity of $\lambda$. In the context of the Laplacian, the core--forbidden vertices are typically known as the \textit{Fiedler} vertices, after the pioneering work by M. Fiedler throughout the 1970s and 1980s.

More recently, new terminology has been introduced for different contexts. Of significant importance are the $0$--core and $0$--core--forbidden vertices, which have been studied greatly for singular trees. The \textit{nullity} $\eta(G)$ of a graph is the dimension of the kernel of $\mathbf{A}$. In \cite{FerCruz}, the $0$--core--forbidden vertices of a tree are termed as $F$--vertices. Those vertices that, in particular, increase the nullity of $T$ or the multiplicity of $0$ for the Laplacian of $T$, are termed as $P$--vertices.

\subsection{Overview}

For a graph $G$ with independent $\lambda$-core vertices, the set of their neighbours $N(CV_\lambda)$ is disjoint from $CV_\lambda$. Hence one can define a partition of $V$ with blocks $CV_\lambda$, $N(CV_\lambda)$, and $CFV_{R, \lambda}$ (which is the set of remaining vertices not in $CV_\lambda$ and $N(CV_\lambda)$). Such a partitioning was studied closely for the eigenvalue $\lambda = 0$ of $\mathbf{A}$ by {\cite{ScMfBg}} for graphs with independent $CV_0$, and by {\cite{JauMol}} for the case of singular trees (which always have an independent $CV_0$). In Section 2, we introduce the $\lambda$--Core Distance Partition ($\lambda$--CDP), which is a natural generalisation for the case when $CV_\lambda$ is not an independent set and hence $N(CV_\lambda)$ is not disjoint from $CV_\lambda$. Moreover, we study the relation between the $\lambda$--CDP and the symmetries of the graph, and how the core distance partitions of two distinct eigenvalues relate to each other. 

In Section 3 we define an index for the information content of a graph related to its $\lambda$--CDP, and study its lower and upper bounds for a fixed number of vertices $n$ and varying number of $\lambda$--core vertices, $|CV_\lambda|$. Lastly in Section 4 we consider a number of applications related to this index and the structure of graphs with a singular adjacency matrix, most notably molecular graphs.

\section{$\lambda$--Core Distance Partitions}

Consider a connected graph $G$, and $\lambda \in \text{spec}\left(\mathbf{U}\right)$. The function $d(u, v) \colon V \times V \rightarrow \mathbb{N} \ \dot\cup \ \{0\}$ gives the length of the shortest path between any two vertices $u$ and $v$. We define the minimum distance from any vertex $v \in V$ to some $\lambda$--core vertex $u \in CV_\lambda$ by the function $d_\lambda (v) \colon V \rightarrow \mathbb{N} \ \dot\cup \ \{0\}$,
\begin{equation} \label{dYeqn}
    d_\lambda(v) = \min\left\{d(v, u) \colon u \in CV_\lambda\right\}.
\end{equation}
The non--negative integer $D_\lambda = \max\left\{d_\lambda(v) \colon v \in V\right\}$ is the maximum distance assigned by $d_\lambda$ to a vertex, which is bounded above by the diameter of the graph. We define the set of vertices at a minimum distance $0 \leq i \leq D_\lambda$ from $\lambda$--core vertex as the set $V_{\lambda, i} = \left\{v \in V \colon d_\lambda (v) = i\right\}$. It follows then that $CV_\lambda = V_{\lambda, 0}$. 

\begin{proposition}[$\lambda$--Core Distance Partition] \label{distParn}
Let $\lambda \in {\rm spec}(\mathbf{U})$. The collection $\mathcal{V}_\lambda = \left\{V_{\lambda, 0},\dots, V_{\lambda, D_\lambda}\right\}$ is a partition of $V$.
\end{proposition}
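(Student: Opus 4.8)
The plan is to verify the three defining properties of a set partition: that the blocks cover $V$, that they are pairwise disjoint, and that none of them is empty. The first two follow almost immediately once we observe that $d_\lambda$ is a well-defined total function $V \to \{0, 1, \dots, D_\lambda\}$; the genuine work lies in establishing non-emptiness of the intermediate blocks, which is where connectedness of $G$ enters.

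First I would confirm that $d_\lambda$ is well-defined. Since $\lambda \in {\rm spec}(\mathbf{U})$ there is an eigenvector of $\lambda$ with at least one non-zero entry, so $CV_\lambda \neq \emptyset$; and since $G$ is connected, $d(v, u)$ is finite for every $v \in V$ and $u \in CV_\lambda$. Hence the minimum in \eqref{dYeqn} is taken over a non-empty finite set of non-negative integers and is attained, so each $v$ is assigned exactly one value $d_\lambda(v) \in \{0, \dots, D_\lambda\}$. Covering and disjointness then drop out directly: every $v \in V$ satisfies $d_\lambda(v) = i$ for some $i$ with $0 \le i \le D_\lambda$, so $v \in V_{\lambda, i}$ and $\bigcup_{i} V_{\lambda, i} = V$; and if $v \in V_{\lambda, i} \cap V_{\lambda, j}$ then $i = d_\lambda(v) = j$, so distinct blocks are disjoint.

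The main obstacle is showing that each block $V_{\lambda, i}$, for $0 \le i \le D_\lambda$, is non-empty; equivalently, that $d_\lambda$ attains every integer value between $0$ and its maximum $D_\lambda$. Here I would use a discrete intermediate-value argument built on the observation that $d_\lambda$ changes by at most $1$ across an edge: for adjacent $u, v$ and any $c \in CV_\lambda$, the triangle inequality gives $d(u, c) \le 1 + d(v, c)$, and taking the minimum over $c$ yields $|d_\lambda(u) - d_\lambda(v)| \le 1$. Now pick $w$ with $d_\lambda(w) = D_\lambda$ and a nearest core vertex $u^* \in CV_\lambda$, so that $d(w, u^*) = D_\lambda$, and fix a shortest $w$--$u^*$ path $w = w_0, w_1, \dots, w_{D_\lambda} = u^*$. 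Along this path $d(w_k, u^*) = D_\lambda - k$, giving $d_\lambda(w_k) \le D_\lambda - k$; combined with the reverse bound $d_\lambda(w_k) \ge d_\lambda(w_0) - d(w_0, w_k) = D_\lambda - k$ obtained by iterating the edge estimate, we conclude $d_\lambda(w_k) = D_\lambda - k$. Thus as $k$ ranges over $0, \dots, D_\lambda$ the values $d_\lambda(w_k)$ run through every integer in $\{0, \dots, D_\lambda\}$, so every block is inhabited.

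This shortest-path step is the part requiring the most care, since it is the only place where connectedness is essential: in a disconnected graph with a component containing no $\lambda$--core vertex, $d_\lambda$ would fail to be finite, and even within a single component the Lipschitz estimate is exactly what rules out a ``gap'' in the attained distances. Everything else is bookkeeping that follows from $d_\lambda$ being a well-defined function whose level sets are precisely the $V_{\lambda, i}$.
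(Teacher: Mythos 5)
Your proposal is correct, and its treatment of covering and disjointness coincides with the paper's own proof: connectedness plus $CV_\lambda \neq \emptyset$ makes $d_\lambda$ a well-defined total function, so its level sets cover $V$ and are pairwise disjoint. Where you go beyond the paper is in verifying that every block $V_{\lambda,i}$ for $0 \leq i \leq D_\lambda$ is actually non-empty; the paper's proof is silent on this and implicitly relies on the reader accepting that no intermediate distance value can be skipped. Your discrete intermediate-value argument --- the $1$-Lipschitz estimate $|d_\lambda(u) - d_\lambda(v)| \leq 1$ across an edge, combined with walking a shortest path from a vertex realising $D_\lambda$ down to its nearest core vertex and pinning $d_\lambda(w_k) = D_\lambda - k$ from both sides --- is correct and closes this gap cleanly. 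If one takes ``partition'' in the strict sense (no empty blocks), your version is the more complete proof; the cost is only a few extra lines, and the Lipschitz observation is independently useful (it is essentially the fact invoked later in the proof of Theorem \ref{decompThm} when arguing that a vertex in $V_{\lambda,j}$ with a neighbour in $V_{\lambda,i}$ forces $j \leq i+1$).
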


\begin{proof}
    Since $G$ is connected, there exists a path between any pair of vertices. In particular, every vertex $v \in V$ has a shortest path to some $\lambda$--core vertex. Hence $d_\lambda$ exists for every vertex $v \in V$ and therefore $v$ belongs to some $V_{\lambda, i} \in \mathcal{V}_\lambda$, for $0 \leq i \leq D_\lambda$. Clearly then,  $V$ is the union of $V_{\lambda, 0},\dots, V_{\lambda, D_\lambda}$.
    
    Moreover, since $d_\lambda$ is clearly not one--to--many, we have that the intersection of $V_{\lambda, i}$ and $V_{\lambda, j}$ is empty, for $i \neq j$. The result follows.
\end{proof}

\begin{remark} \label{eqrelnRem}
Note that $d_\lambda$ gives an equivalence relation, where two vertices $u$ and $v$ are said to be related if $d_\lambda (u) = d_\lambda (v)$. Hence the $\lambda$--CDP $\mathcal{V}_\lambda$ is in fact a collection of equivalence classes.
\end{remark}

\subsection{$\lambda$--Core Distance Partitions and Orbit Structure}

The automorphism group $\text{Aut}(G)$ of a graph is the group from $G$ onto itself. If $v_i$ is adjacent to $v_j$ then for any automorphism $\sigma \in \text{Aut}(G)$, $\sigma(v_i)$ is adjacent to $\sigma(v_j)$. The \textit{orbit} of $\text{Aut}(G)$ on $v \in V$ is the set $\pi(v) = \left\{\sigma (v) \colon \sigma \in \text{Aut}(G)\right\}$. We denote the set of orbits by $\Pi$. A graph $G$ is vertex--transitive if it has a single orbit. The orbits of $\text{Aut}(G)$ induce a partition $\Pi = \left\{\pi_1, \pi_2,\dots, \pi_s\right\}$ of $V$, known as the \textit{orbital partition} of $G$.

\begin{figure}[ht!]
    \centering
    \begin{tikzpicture}
    % Node styles
    \tikzstyle{ncv}=[thick, fill=white, draw=black, shape=circle]
    \tikzstyle{cv}=[thick, fill={rgb,255: red,0; green,172; blue,172}, draw=black, shape=circle]
    \tikzstyle{cfvR}=[thick, fill={rgb,255: red,183; green,183; blue,183}, draw=black, shape=circle]
    
    % Edge styles
    \tikzstyle{line}=[-, draw=black]
    \tikzstyle{edge}=[thick, -, draw=black]
    \tikzstyle{newEdge}=[-, draw={rgb,255: red,0; green,8; blue,160}]
    \tikzstyle{delEdge}=[-, draw={rgb,255: red,187; green,7; blue,34}]
    \tikzstyle{dirEdge}=[thick, ->]

		\node [font=\small, style=cv] (0) at (-3.5, 1.5) {1};
		\node [font=\small, style=cv] (1) at (-3.5, 0.5) {2};
		\node [font=\small, style=cv] (2) at (-3.5, -0.5) {3};
		\node [font=\small, style=cv] (3) at (-2, 2) {4};
		\node [font=\small, style=cv] (4) at (-2, 1) {5};
		\node [font=\small, style=cv] (5) at (-2, 0) {6};
		\node [font=\small, style=ncv] (6) at (-0.5, 1.5) {7};
		\node [font=\small, style=ncv] (7) at (-0.5, 0.5) {8};
		\node [font=\small, style=ncv] (8) at (-0.5, -0.5) {9};
		\node [font=\small, style=cfvR, scale=0.85] (9) at (2, 1.5) {10};
		\node [font=\small, style=cfvR, scale=0.85] (10) at (1, 0.5) {11};
		\node [font=\small, style=cfvR, scale=0.85] (11) at (2, -0.5) {12};
		\node [font=\small] (12) at (-4, -1) {};
		\node [font=\small] (13) at (-4, -1.5) {};
		\node [font=\small] (14) at (-4, -1.25) {};
		\node [font=\small] (15) at (-1.5, -1) {};
		\node [font=\small] (16) at (-1.5, -1.25) {};
		\node [font=\small] (17) at (-1.5, -1.5) {};
		\node [font=\small] (18) at (0.5, -1.25) {};
		\node [font=\small] (19) at (0.5, -1) {};
		\node [font=\small] (20) at (0.5, -1.5) {};
		\node [font=\small] (21) at (2.5, -1.25) {};
		\node [font=\small] (22) at (2.5, -1) {};
		\node [font=\small] (23) at (2.5, -1.5) {};
		\node [font=\small] (24) at (-2.75, -1.5) {$V_{1,0}$};
		\node [font=\small] (25) at (-0.5, -1.5) {$V_{1,1}$};
		\node [font=\small] (26) at (1.5, -1.5) {$V_{1,2}$};
		\node [font=\small] (27) at (-4.5, 0.5) {G};

		\draw (0) to (3);
		\draw (0) to (1);
		\draw (1) to (2);
		\draw (2) to (5);
		\draw (5) to (4);
		\draw (4) to (3);
		\draw (0) to (6);
		\draw (3) to (6);
		\draw (1) to (7);
		\draw (4) to (7);
		\draw (2) to (8);
		\draw (5) to (8);
		\draw (6) to (9);
		\draw (7) to (10);
		\draw (8) to (11);
		\draw (9) to (11);
		\draw (11) to (10);
		\draw (10) to (9);
		\draw [style=edge] (12.center) to (13.center);
		\draw [style=edge] (15.center) to (17.center);
		\draw [style=edge, in=180, out=0] (14.center) to (16.center);
		\draw [style=edge] (16.center) to (18.center);
		\draw [style=edge] (19.center) to (20.center);
		\draw [style=edge] (22.center) to (23.center);
		\draw [style=edge] (18.center) to (21.center);
\end{tikzpicture}
    \caption{$1$--CDP of the graph $G$, for $\mathbf{U} = \mathbf{A}$.}
    \label{fig:cubicExample}
\end{figure}
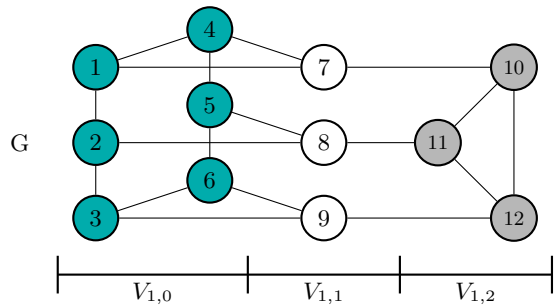

Consider the cubic graph $G$ in Figure \ref{fig:cubicExample} as a motivating example. The spectrum of its adjacency matrix (up to 3 decimal places) is: $\left\{3^{(1)}, 2.115^{(1)}, 2^{(1)}, 1.303^{(1)}, 1^{(1)}, -0.254^{(1)}, -1^{(3)}, -1.861^{(1)}, -2^{(1)}, -2.303^{(1)}\right\}.$ For $\lambda = 1$, the $1$--CDP for $\mathbf{U} = \mathbf{A}$ is: $$\mathcal{V}_1 \ = \left\{V_{1, 0} = \{1, 2, 3, 4, 5, 6\}, \ V_{1,1} = \{7, 8, 9\}, \ V_{1,2} = \{10, 11, 12\}\right\}$$ Moreover, the orbits of ${\rm Aut}(G)$ acting on $V$ are, $$\Pi = \left\{\pi_1 = \{1, 3, 4, 6\}, \pi_2 = \{2, 5\}, \pi_3 = \{7, 9\}, \pi_4 = \{8\}, \pi_5 = \{10, 12\}, \pi_6 = \{11\}\right\}$$ Notice that each block in the $1$--CDP is a disjoint union of orbits in $\Pi$. In this section we will show that this holds for every $\lambda$--CDP of any universal adjacency matrix $\mathbf{U}$ of $G$.

Given a permutation $\sigma \in \text{Aut}(G)$, there exists a corresponding permutation matrix $\mathbf{P_\sigma}$ such that $\mathbf{P_\sigma A} = \mathbf{AP_\sigma}$. Since $\mathbf{D}$ and $\mathbf{I}$ have constant entries with respect to each orbit, and since $\mathbf{J}$ is a constant matrix, then $\mathbf{P}_\sigma$ also commutes with $\mathbf{D}$, $\mathbf{I}$, and $\mathbf{J}$. Since matrix multiplication is distributive, then $\mathbf{U}$ and $\mathbf{P}_\sigma$ commute: $\mathbf{U}\mathbf{P}_\sigma = \mathbf{P}_\sigma \mathbf{U}$. The following result is due to {\cite{Akbari}}.

\begin{proposition}
[{\cite{Akbari}}] \label{VertTransitiveCore}
Let $G$ be vertex--transitive and $\lambda \in {\rm spec}(\mathbf{A})$. Then every vertex of $G$ is a $\lambda$--core vertex.
\end{proposition}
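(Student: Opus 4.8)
The plan is to exploit the commutativity $\mathbf{P}_\sigma \mathbf{A} = \mathbf{A}\mathbf{P}_\sigma$ established just above, which forces the $\lambda$--eigenspace of $\mathbf{A}$ to be invariant under every automorphism, and then to combine this invariance with vertex--transitivity. The underlying principle is that $CV_\lambda$ cannot distinguish between symmetric vertices, so on a graph whose automorphism group sees all vertices as equivalent, $CV_\lambda$ must be either empty or everything.

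First I would record that, since $\lambda \in \text{spec}(\mathbf{A})$, the eigenspace $\mathcal{E}_\lambda = \ker(\mathbf{A} - \lambda\mathbf{I})$ is non--trivial. Any non--zero $\mathbf{x} \in \mathcal{E}_\lambda$ has at least one non--zero entry, say the $i$--th, so the corresponding vertex $v_i$ lies in $CV_\lambda$; in particular $CV_\lambda \neq \emptyset$.

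The key step is to show that $CV_\lambda$ is a union of orbits of $\text{Aut}(G)$. Take any $\sigma \in \text{Aut}(G)$ and any $v_i \in CV_\lambda$, witnessed by an eigenvector $\mathbf{x} \in \mathcal{E}_\lambda$ with $x_i \neq 0$. Since $\mathbf{P}_\sigma$ commutes with $\mathbf{A}$, we have $\mathbf{A}(\mathbf{P}_\sigma \mathbf{x}) = \mathbf{P}_\sigma(\mathbf{A}\mathbf{x}) = \lambda(\mathbf{P}_\sigma \mathbf{x})$, so $\mathbf{P}_\sigma \mathbf{x} \in \mathcal{E}_\lambda$. Because $\mathbf{P}_\sigma$ merely permutes the coordinates according to $\sigma$, the entry of $\mathbf{P}_\sigma \mathbf{x}$ indexed by $\sigma(v_i)$ equals $x_i \neq 0$. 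Hence $\sigma(v_i) \in CV_\lambda$, and so $CV_\lambda$ is invariant under the action of $\text{Aut}(G)$; equivalently, it is a union of orbits $\pi(v)$.

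Finally, vertex--transitivity means $\text{Aut}(G)$ has a single orbit, namely all of $V$. Since $CV_\lambda$ is a non--empty union of orbits, it must equal $V$, and every vertex is a $\lambda$--core vertex. The only point requiring care is the coordinate bookkeeping in the key step: confirming both that $\mathbf{P}_\sigma$ sends $\mathcal{E}_\lambda$ into itself (immediate from commutativity) and that it transports the non--zero entry at $v_i$ precisely to the entry at $\sigma(v_i)$ rather than to some other coordinate. I expect this to be the only genuine subtlety; everything else follows directly from the definition of a $\lambda$--core vertex and from vertex--transitivity.
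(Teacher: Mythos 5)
Your proof is correct and follows essentially the same route as the paper: the paper states this result as a citation of Akbari and immediately proves the generalisation (Proposition \ref{orbsCoreVert}) that $CV_\lambda$ is a union of orbits, using exactly your permutation--matrix commutativity argument, then notes the vertex--transitive case is the specialisation to a single orbit. Your only addition is the (necessary and correct) observation that $\lambda \in {\rm spec}(\mathbf{A})$ forces $CV_\lambda \neq \emptyset$, so the single orbit must be all of $V$.
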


The following proposition is a generalisation of Proposition \ref{VertTransitiveCore}, of which it is a special case for when $G$ is vertex--transitive and hence has a single orbit.

\begin{proposition} \label{orbsCoreVert}
Let $\lambda \in {\rm spec}(\mathbf{U})$ and let $u, v$ belong to the same orbit $\pi \in \Pi$. If $u$ is a $\lambda$--core vertex, then $v$ is also a $\lambda$--core vertex. 
\end{proposition}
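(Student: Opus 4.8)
The plan is to prove the statement by leveraging the commutation relation established just before the proposition: for any automorphism $\sigma \in \text{Aut}(G)$, the permutation matrix $\mathbf{P}_\sigma$ satisfies $\mathbf{U}\mathbf{P}_\sigma = \mathbf{P}_\sigma\mathbf{U}$. The key observation is that commuting with $\mathbf{U}$ means $\mathbf{P}_\sigma$ maps eigenvectors of $\lambda$ to eigenvectors of $\lambda$, and since $\mathbf{P}_\sigma$ permutes the entries of a vector according to $\sigma$, it will carry a non-zero entry at a $\lambda$-core vertex to a non-zero entry at its image under $\sigma$.

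First I would set up the situation concretely. Suppose $u$ is a $\lambda$-core vertex, so by definition there exists an eigenvector $\mathbf{x}$ with $\mathbf{U}\mathbf{x} = \lambda\mathbf{x}$ whose entry at $u$ is non-zero. Since $u$ and $v$ lie in the same orbit, there is some $\sigma \in \text{Aut}(G)$ with $\sigma(u) = v$. I would then consider the vector $\mathbf{y} = \mathbf{P}_\sigma\mathbf{x}$. Using the commutation relation, I would compute
\begin{equation}
    \mathbf{U}\mathbf{y} = \mathbf{U}\mathbf{P}_\sigma\mathbf{x} = \mathbf{P}_\sigma\mathbf{U}\mathbf{x} = \mathbf{P}_\sigma(\lambda\mathbf{x}) = \lambda(\mathbf{P}_\sigma\mathbf{x}) = \lambda\mathbf{y},
\end{equation}
so $\mathbf{y}$ is itself an eigenvector of $\mathbf{U}$ for the eigenvalue $\lambda$ (it is non-zero since $\mathbf{P}_\sigma$ is invertible and $\mathbf{x}\neq\mathbf{0}$).

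Next I would trace the entries. The permutation matrix $\mathbf{P}_\sigma$ acts by relocating coordinates according to $\sigma$, so the entry of $\mathbf{y}$ at vertex $v = \sigma(u)$ equals the entry of $\mathbf{x}$ at $u$ (up to the indexing convention fixing whether $\mathbf{P}_\sigma$ uses $\sigma$ or $\sigma^{-1}$; either way, by choosing $\sigma$ or its inverse appropriately, the non-zero entry at $u$ is transported to the position indexed by $v$). Since that entry is non-zero, $\mathbf{y}$ is an eigenvector of $\lambda$ with a non-zero entry at $v$, which is exactly the definition of $v$ being a $\lambda$-core vertex.

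The main obstacle, though minor, is bookkeeping the indexing convention for $\mathbf{P}_\sigma$ so that the non-zero entry lands precisely at $v$ rather than at $\sigma^{-1}(v)$; this is resolved by being careful about whether the $(i,j)$ entry of $\mathbf{P}_\sigma$ is $1$ when $\sigma(v_j)=v_i$ or when $\sigma(v_i)=v_j$, and replacing $\sigma$ by $\sigma^{-1}$ if needed (noting $\sigma^{-1}\in\text{Aut}(G)$ as well). No deeper difficulty arises, as the entire argument rests on the already-established commutation $\mathbf{U}\mathbf{P}_\sigma=\mathbf{P}_\sigma\mathbf{U}$.
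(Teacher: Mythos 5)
Your proposal is correct and follows essentially the same argument as the paper: take $\sigma$ with $\sigma(u)=v$, use the commutation $\mathbf{U}\mathbf{P}_\sigma=\mathbf{P}_\sigma\mathbf{U}$ to show $\mathbf{P}_\sigma\mathbf{x}$ is again a $\lambda$--eigenvector, and observe that the non-zero entry at $u$ is transported to $v$. Your extra care about the indexing convention for $\mathbf{P}_\sigma$ is a reasonable refinement of a point the paper glosses over.
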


\begin{proof}
Since $u$ and $v$ belong to the same orbit, it follows by the definition of an orbit that there exists a permutation $\sigma \in \text{Aut}(G)$, such that $\sigma(u) = v$. Let $\mathbf{P_\sigma}$ be the corresponding permutation matrix. Since $u$ is $\lambda$--core, there exists an eigenvector $\mathbf{x}$ for $\lambda$ such that the $u^{\rm th}$ entry of $\mathbf{x}$ is non--zero. Now, $\mathbf{UP_\sigma x} = \mathbf{P_\sigma Ux} = \lambda \mathbf{P_\sigma x}$, since $\mathbf{UP_\sigma} = \mathbf{P_\sigma U}$ and $\mathbf{Ux} = \lambda\mathbf{x}$. Hence $\mathbf{P_\sigma x}$ is an eigenvector of $\mathbf{U}$ as well. Since $\mathbf{P_\sigma}$ permutes the $u^{\text{th}}$ entry of $\mathbf{x}$ with the $v^{\text{th}}$ entry, it follows that the $v^{\rm th}$ entry of $\mathbf{P_\sigma x}$ is non--zero. Therefore $v$ is a $\lambda$--core vertex.
\end{proof}

\begin{remark} \label{cv_orbs}
It follows that all the vertices in the same orbit are either $\lambda$--core or $\lambda$--core forbidden. Hence, $CV_\lambda$ is the disjoint union of some collection of orbits $\left\{\pi_{i_1},\dots, \pi_{i_s}\right\} \subseteq \Pi$.
\end{remark}

\begin{theorem} \label{decompThm}
Let $\lambda \in {\rm spec}(\mathbf{U})$ and $0 \leq i \leq D_\lambda$. Then $V_{\lambda, i}$ is the disjoint union of a collection of orbits $\left\{\pi_1,\dots, \pi_{s_i} \right\} \subseteq \Pi$.
\end{theorem}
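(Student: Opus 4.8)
The plan is to prove the contrapositive-free statement that the distance function $d_\lambda$ is constant on each orbit of $\text{Aut}(G)$; the theorem then follows at once. Indeed, if $d_\lambda$ is constant on orbits, then every orbit $\pi(v)$ is contained in the single block $V_{\lambda, d_\lambda(v)}$, and since the orbits form the partition $\Pi$ of $V$, each block $V_{\lambda, i}$ is precisely the disjoint union of those orbits it meets. So the entire content reduces to showing $d_\lambda(\sigma(v)) = d_\lambda(v)$ for every $v \in V$ and every $\sigma \in \text{Aut}(G)$.

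First I would record two ingredients. By Remark~\ref{cv_orbs}, the set $CV_\lambda$ is a disjoint union of orbits, so every $\sigma \in \text{Aut}(G)$ carries each orbit of $CV_\lambda$ onto another orbit of $CV_\lambda$; in particular $\sigma$ restricts to a bijection of $CV_\lambda$ onto itself. Secondly, every automorphism preserves graph distance: since $\sigma$ maps adjacent vertices to adjacent vertices, and $\sigma^{-1} \in \text{Aut}(G)$ does likewise, a shortest $v$--$w$ path is carried to a $\sigma(v)$--$\sigma(w)$ walk of equal length and conversely, whence $d(\sigma(v), \sigma(w)) = d(v, w)$ for all $v, w \in V$.

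With these in hand, I would fix $v \in V$ and $\sigma \in \text{Aut}(G)$ and compute directly. Writing each $u \in CV_\lambda$ uniquely as $u = \sigma(w)$ with $w \in CV_\lambda$ (using that $\sigma$ permutes $CV_\lambda$), and then applying distance-preservation, we obtain
\begin{align*}
d_\lambda(\sigma(v))
&= \min\{d(\sigma(v), u) : u \in CV_\lambda\} \\
&= \min\{d(\sigma(v), \sigma(w)) : w \in CV_\lambda\} \\
&= \min\{d(v, w) : w \in CV_\lambda\} = d_\lambda(v).
\end{align*}
Thus $\sigma(v)$ and $v$ lie in the same block $V_{\lambda, i}$ with $i = d_\lambda(v)$. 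Since $\sigma$ was arbitrary, the whole orbit $\pi(v)$ is contained in $V_{\lambda, i}$, and the decomposition of each block into orbits follows as described above.

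The only step requiring any external input is the invariance of $CV_\lambda$ under $\text{Aut}(G)$, and I expect this to be the conceptual crux rather than a genuine obstacle, since it is supplied directly by Proposition~\ref{orbsCoreVert} and Remark~\ref{cv_orbs}. Everything else, namely the preservation of distances by automorphisms and the re-indexing of the minimum over the permuted set $CV_\lambda$, is routine; the one subtlety worth stating carefully is that the second equality above is valid precisely because $\sigma$ maps $CV_\lambda$ \emph{onto} itself, so that the minimisation ranges over exactly the same set before and after applying $\sigma$.
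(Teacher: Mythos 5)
Your proof is correct, and it takes a genuinely different route from the paper. The paper proceeds by induction on the distance index $i$: the base case is Remark~\ref{cv_orbs}, and the inductive step is a contradiction argument showing that if $V_{\lambda,i+1}$ split an orbit, an automorphism would carry a vertex having a neighbour in $V_{\lambda,i}$ to a vertex without one, which is impossible once the blocks $V_{\lambda,0},\dots,V_{\lambda,i}$ are known to be unions of orbits. You instead prove the single, stronger invariance $d_\lambda(\sigma(v)) = d_\lambda(v)$ for all $\sigma \in \mathrm{Aut}(G)$, obtained from two facts: automorphisms preserve graph distance, and $\sigma$ permutes $CV_\lambda$ (which is exactly Remark~\ref{cv_orbs}). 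This collapses the induction entirely and, in my view, is the cleaner argument --- it isolates the one nontrivial input (Aut-invariance of $CV_\lambda$) and makes explicit the distance-preservation property that the paper only uses implicitly through adjacency preservation. You are also right to flag the onto-ness of $\sigma|_{CV_\lambda}$ as the point that legitimises re-indexing the minimum; that is precisely where Remark~\ref{cv_orbs} is consumed. Both proofs establish the same statement with the same external inputs, so nothing is lost; your version simply buys brevity and transparency at no cost in generality.
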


\begin{proof}
That $V_{\lambda, 0} = CV_\lambda$ is the disjoint union of orbits is discussed in Remark \ref{cv_orbs}. Suppose that up to some $0 \leq i < D_\lambda$, $\forall 0 \leq k \leq i, V_{\lambda, k}$ is the disjoint union of some collection of orbits $\Pi_k = \left\{\pi^{(k)}_1,\dots, \pi^{(k)}_{s_k} \right\} \subseteq \Pi$. We show that the claim holds for $i+1$.

Suppose that $V_{\lambda, i+1}$ is not the disjoint union of some collection of orbits. Then there exists two vertices $u$ and $v$ in an orbit $\pi$, such that $v$ is in $V_{\lambda, i+1}$ and $u$ is in $V_{\lambda, j}$, for some $i+2 \leq j \leq D_\lambda$. Note that $\pi$ is distinct from those in the collection $\dot\cup_{k = 0}^i \Pi_k$.

By definition of the $\lambda$--CDP, $v$ in $V_{\lambda, i+1}$ must have a neighbour in $V_{\lambda, i}$. Moreover, there must exist a permutation $\sigma \in {\rm Aut}(G)$ which maps $v$ to $u$ while preserving adjacency. In particular, $V_{\lambda, k}$ for $0 \leq k \leq i$ are preserved since they are the disjoint union of orbits, and hence $u$ must have a neighbour in $V_{\lambda, i}$. Since $i + 1 < j$, then $u$ has a path to some $\lambda$--core vertex of length less than $j$, and hence cannot be in $V_{\lambda, j}$, a contradiction. Therefore $V_{\lambda, i+1}$ is the disjoint union of some collection of orbits, completing the proof.
\end{proof}

\subsection{Relating the Core Distance Partitions of Two Distinct Eigenvalues}

We continue our study of the relation between the spectrum of $\mathbf{U}$ and the structure of $G$, in light of the $\lambda$--CDP. We say that for two distinct eigenvalues $\lambda$ and $\mu$, the $\mu$--CDP can be \textit{constructed} from the $\lambda$--CDP if every block in the $\mu$--CDP is a disjoint union of blocks in the $\lambda$--CDP. In particular, we are interested in the following problem.

\begin{problem} \label{constProb}
Let $\lambda \in {\rm spec}(\mathbf{U})$ and let the $\lambda$--CDP be given. When does there exist an eigenvalue $\mu \neq \lambda$ of $\mathbf{U}$, such that the $\mu$--CDP can be constructed from the $\lambda$--CDP?
\end{problem}

As it turns out, if the $\lambda$--CDP is an equitable partition, then under certain circumstances reconstruction is possible. First however we require some preliminary definitions and results. The \textit{characteristic matrix} $\mathbf{C}$ of a partition $\{V_1,\dots, V_k\}$ is an $n \times k$ matrix, where the $i^{\rm th}$ column $\mathbf{c}_i$ is associated with the block $V_i$, such that for every vertex in $V_i$ the corresponding entry in $\mathbf{c}_i$ is 1, and 0 otherwise. An \textit{equitable partition} with $k$ blocks is a partition of $V$ such that there exists a $k \times k$ matrix $\mathbf{B}$, known as the \textit{divisor matrix}, satisfying $\mathbf{U}\mathbf{C} = \mathbf{C}\mathbf{B}$.

\begin{remark} \label{eqVectRemark}
Let $\mathbf{x}$ be an eigenvector for $\mu$ of $\mathbf{B}$ such that its $i^{\rm th}$ entry $x_i$ is non--zero. If $\mathbf{C}\mathbf{x}$ is also an eigenvector for some $\lambda \in {\rm spec}(\mathbf{U})$, then all the vertices in $V_i$ must be $\lambda$--core vertices, since the corresponding entry in $\mathbf{C}\mathbf{x}$ is $x_i$ by definition of $\mathbf{C}$.
\end{remark}

It is well--known that for the case when $\mathbf{U} = \mathbf{A}$, the characteristic polynomial of $\mathbf{B}$ divides the characteristic polynomial of $\mathbf{A}$ (Theorem 3.9.5, {\cite{CvRowSim}}). More generally, the result holds for any universal adjacency matrix $\mathbf{U}$, which we state as follows without proof.
\begin{theorem} \label{charPolynDiv}
    Let $\mathcal{C}$ be an equitable partition of $V$ with divisor and characteristic matrices $\mathbf{B}$ and $\mathbf{C}$, respectively. The characteristic polynomial of $\mathbf{B}$ divides the characteristic polynomial of $\mathbf{U}$.
\end{theorem}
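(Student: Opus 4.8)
The plan is to exploit the defining relation $\mathbf{U}\mathbf{C} = \mathbf{C}\mathbf{B}$ to show that the column space of $\mathbf{C}$ is a $\mathbf{U}$--invariant subspace on which the induced operator is represented by $\mathbf{B}$, and then to read off the divisibility from a block--triangular form of $\mathbf{U}$.

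First I would record two basic facts about $\mathbf{C}$. Since the blocks $V_1,\dots,V_k$ of the partition are non--empty and pairwise disjoint, the columns $\mathbf{c}_1,\dots,\mathbf{c}_k$ are $0/1$ indicator vectors with disjoint supports, hence linearly independent; thus $\mathbf{C}$ has full column rank $k$, and its column space $W = \operatorname{col}(\mathbf{C})$ has dimension $k$. Second, the relation $\mathbf{U}\mathbf{C} = \mathbf{C}\mathbf{B}$ says precisely that each column of $\mathbf{U}\mathbf{C}$ is a linear combination of the columns of $\mathbf{C}$, so $\mathbf{U}W \subseteq W$; that is, $W$ is $\mathbf{U}$--invariant.

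Next I would pass to a convenient basis. Because $\mathbf{C}$ has full column rank, I can choose an $n \times (n-k)$ matrix $\mathbf{C}'$ so that $\mathbf{Q} = [\,\mathbf{C} \mid \mathbf{C}'\,]$ is invertible. Using $\mathbf{U}\mathbf{C} = \mathbf{C}\mathbf{B}$ together with the invariance of $W$, the first $k$ columns of $\mathbf{U}\mathbf{Q}$ lie in $W = \operatorname{col}(\mathbf{C})$, so in the basis given by the columns of $\mathbf{Q}$ the matrix $\mathbf{U}$ takes the block upper--triangular form
\begin{equation*}
\mathbf{Q}^{-1}\mathbf{U}\mathbf{Q} = \begin{pmatrix} \mathbf{B} & \mathbf{X} \\ \mathbf{0} & \mathbf{Z} \end{pmatrix},
\end{equation*}
for some matrices $\mathbf{X}$ and $\mathbf{Z}$. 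Here the top--left block is exactly $\mathbf{B}$, because $\mathbf{U}\mathbf{c}_j$ equals the $j$--th column of $\mathbf{C}\mathbf{B}$, whose coordinates in the basis $\{\mathbf{c}_i\}$ are the entries of the $j$--th column of $\mathbf{B}$.

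Finally, since similar matrices share a characteristic polynomial and the determinant of a block upper--triangular matrix factorises, I would conclude
\begin{equation*}
\det(x\mathbf{I}_n - \mathbf{U}) = \det(x\mathbf{I}_k - \mathbf{B})\,\det(x\mathbf{I}_{n-k} - \mathbf{Z}),
\end{equation*}
which shows that the characteristic polynomial of $\mathbf{B}$ divides that of $\mathbf{U}$. I expect the only point requiring genuine care — the main obstacle — to be justifying that the top--left block of the transformed matrix is literally $\mathbf{B}$ rather than merely similar to it; this is exactly where the full column rank of $\mathbf{C}$ and the precise reading of $\mathbf{U}\mathbf{C}=\mathbf{C}\mathbf{B}$ in the chosen basis are used. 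I would also remark that, since $\mathbf{U}$ is symmetric, one could alternatively take $\mathbf{C}'$ with columns spanning the orthogonal complement $W^{\perp}$, which is then also $\mathbf{U}$--invariant; this forces $\mathbf{X}=\mathbf{0}$ and yields a block--diagonal form, but the triangular argument above already suffices and does not even require symmetry.
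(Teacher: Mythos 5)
Your proof is correct. Note that the paper deliberately states this result \emph{without proof}, citing Theorem 3.9.5 of \cite{CvRowSim} for the special case $\mathbf{U}=\mathbf{A}$; your argument is the standard one behind that reference, and you have correctly observed that it transfers verbatim to an arbitrary universal adjacency matrix, since it uses only the relation $\mathbf{U}\mathbf{C}=\mathbf{C}\mathbf{B}$ and the full column rank of $\mathbf{C}$ (never the specific form of $\mathbf{U}$). Your identification of the top--left block of $\mathbf{Q}^{-1}\mathbf{U}\mathbf{Q}$ as literally $\mathbf{B}$ is justified exactly as you say, and the closing remark about using $W^{\perp}$ when $\mathbf{U}$ is symmetric is also sound.
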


\begin{remark} \label{eqCVThmRemark}
Consider some $\lambda$--CDP $\mathcal{V}_\lambda$ to be equitable. By combining Remark \ref{eqVectRemark} and Theorem \ref{charPolynDiv} we observe that for every eigenvalue of $\mathbf{U}$ that is also an eigenvalue of $\mathbf{B}$, the core vertex set has a subset which is the disjoint union of some blocks in $\mathcal{V}_\lambda$.
\end{remark}

\begin{theorem} \label{eqCVThm}
    Let $\lambda \in {\rm spec}(\mathbf{U})$ and $\mathcal{V}_\lambda$ be an equitable partition with divisor and characteristic matrices $\mathbf{B}$ and $\mathbf{C}$, respectively. Let $\mu \in {\rm spec}(\mathbf{B})$. Then there exists a subset of $CV_\mu$ which is the disjoint union of a finite collection of blocks in $\mathcal{V}_\lambda$. Moreover, if $m_{\mathbf{U}}(\mu) = m_{\mathbf{B}}(\mu)$ then equality holds between the two sets and $\mathcal{V}_\mu$ can be constructed from $\mathcal{V}_\lambda$.
\end{theorem}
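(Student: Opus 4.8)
The plan is to exploit the intertwining relation $\mathbf{U}\mathbf{C} = \mathbf{C}\mathbf{B}$ in order to transport eigenvectors of the small divisor matrix $\mathbf{B}$ up to eigenvectors of $\mathbf{U}$. First I would note that $\mathbf{C}$ has full column rank, since its columns are the indicator vectors of the pairwise--disjoint, non--empty blocks of $\mathcal{V}_\lambda$; in particular the map $\mathbf{x} \mapsto \mathbf{C}\mathbf{x}$ is injective, so $\mathbf{C}\mathbf{x} \neq \mathbf{0}$ whenever $\mathbf{x} \neq \mathbf{0}$. Given an eigenvector $\mathbf{x}$ of $\mathbf{B}$ for $\mu$, the one--line computation $\mathbf{U}(\mathbf{C}\mathbf{x}) = (\mathbf{U}\mathbf{C})\mathbf{x} = (\mathbf{C}\mathbf{B})\mathbf{x} = \mathbf{C}(\mu\mathbf{x}) = \mu(\mathbf{C}\mathbf{x})$ shows that $\mathbf{C}\mathbf{x}$ is a non--zero eigenvector of $\mathbf{U}$ for $\mu$. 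Since the entry of $\mathbf{C}\mathbf{x}$ at any vertex of block $V_{\lambda,i}$ equals the $i^{\rm th}$ component $x_i$, Remark \ref{eqVectRemark} gives that every block $V_{\lambda,i}$ with $x_i \neq 0$ lies entirely in $CV_\mu$. Taking $S_\mu$ to be the union of all blocks $V_{\lambda,i}$ for which some eigenvector of $\mathbf{B}$ for $\mu$ is non--zero in position $i$ yields a subset $S_\mu \subseteq CV_\mu$ that is a disjoint union of blocks of $\mathcal{V}_\lambda$, settling the first assertion.

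For the equality under $m_{\mathbf{U}}(\mu) = m_{\mathbf{B}}(\mu)$, I would compare eigenspaces. By injectivity of $\mathbf{C}$, the image $\mathbf{C}(W_{\mathbf{B}})$ of the $\mu$--eigenspace $W_{\mathbf{B}}$ of $\mathbf{B}$ is a subspace of the $\mu$--eigenspace $W_{\mathbf{U}}$ of $\mathbf{U}$ of dimension exactly $m_{\mathbf{B}}(\mu)$. When $m_{\mathbf{U}}(\mu) = m_{\mathbf{B}}(\mu)$ the dimensions coincide, forcing $\mathbf{C}(W_{\mathbf{B}}) = W_{\mathbf{U}}$; that is, every eigenvector of $\mathbf{U}$ for $\mu$ has the form $\mathbf{C}\mathbf{x}$ with $\mathbf{x} \in W_{\mathbf{B}}$. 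Consequently, if a vertex $v \in V_{\lambda,i}$ is $\mu$--core, witnessed by some eigenvector $\mathbf{C}\mathbf{x}$ non--zero at $v$, then $x_i \neq 0$ and hence $V_{\lambda,i} \subseteq S_\mu$. This gives $CV_\mu \subseteq S_\mu$, and together with the first part $CV_\mu = S_\mu$, proving the two sets are equal.

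It remains to construct the whole of $\mathcal{V}_\mu$ from $\mathcal{V}_\lambda$, and this is where I expect the main work to lie. Since $V_{\mu,0} = CV_\mu = S_\mu$ is already a union of blocks of $\mathcal{V}_\lambda$, it suffices to prove a stability property: if $\mathcal{V}_\lambda$ is equitable and $S$ is any union of its blocks, then $v \mapsto \min\{d(v,u) \colon u \in S\}$ is constant on each block of $\mathcal{V}_\lambda$. I would establish this by induction on $t$, showing each distance class $N_t = \{v \colon d_S(v) = t\}$ is a union of blocks, with base case $N_0 = S$. The inductive step uses that for distinct blocks the relation $\mathbf{U}\mathbf{C} = \mathbf{C}\mathbf{B}$ together with $\gamma_A \neq 0$ forces every vertex of $V_{\lambda,i}$ to have the same number of neighbours in $V_{\lambda,j}$ whenever $i \neq j$; hence a block disjoint from $N_0 \cup \dots \cup N_t$ has the same neighbour count in $N_t$ across all its vertices, so either all or none of its vertices lie in $N_{t+1}$, making $N_{t+1}$ a union of blocks. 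Applying this with $S = CV_\mu$ shows every $V_{\mu,t}$ is a union of blocks of $\mathcal{V}_\lambda$, so $\mathcal{V}_\mu$ can be constructed from $\mathcal{V}_\lambda$. The delicate points are isolating that only cross--block neighbour counts are needed (so the diagonal degree term of $\mathbf{U}$ never interferes), and maintaining the invariant that $N_t$ stays a union of blocks so these counts remain well defined; once this invariant is secured, the induction closes cleanly.
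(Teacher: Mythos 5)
Your proof is correct. The first two parts---pushing a basis of the $\mu$--eigenspace of $\mathbf{B}$ through $\mathbf{C}$, using injectivity of $\mathbf{C}$ together with the intertwining relation, and comparing dimensions when $m_{\mathbf{U}}(\mu) = m_{\mathbf{B}}(\mu)$---coincide with the paper's argument, though your one--line computation $\mathbf{U}(\mathbf{C}\mathbf{x}) = \mu(\mathbf{C}\mathbf{x})$ is more self--contained than the paper's appeal to Remark \ref{eqVectRemark} and Theorem \ref{charPolynDiv}. Where you genuinely diverge is in the final step, constructing $\mathcal{V}_\mu$ from $\mathcal{V}_\lambda$. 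The paper exploits the fact that $\mathcal{V}_\lambda$ is itself a distance partition, so adjacent vertices can only lie in blocks with consecutive indices; it grows $CV_\mu$ outward via the index recursion $X^d_B = \left\{i : \exists j \in X^{d-1}_B,\ i = j \pm 1\right\} \setminus \bigcup_{k=0}^{d-1} X^k_B$ and identifies these index sets with the distance classes of $\mathcal{V}_\mu$. You instead prove a stability lemma valid for an arbitrary equitable partition: since $\mathbf{U}\mathbf{C} = \mathbf{C}\mathbf{B}$ and $\gamma_A \neq 0$ force the cross--block neighbour count $\left|N(v) \cap V_{\lambda, j}\right|$ to be constant over $v \in V_{\lambda, i}$ whenever $i \neq j$ (the degree, identity and all--ones terms only contribute block--constant offsets), the distance classes from any union of blocks are again unions of blocks. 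Your route is slightly longer but more robust: it does not depend on the consecutive--index adjacency structure of the $\lambda$--CDP, and it makes explicit the equitability property that the paper's recursion uses only implicitly, namely that if one vertex of a block has a neighbour in a given other block then all of its vertices do, so that an entire block receives the same $\mu$--distance. What the paper's version buys in exchange is a concrete description of $\mathcal{V}_\mu$ purely by index arithmetic on $\{0, \dots, D_\lambda\}$.
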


\begin{proof}
Let $B$ be a basis for the eigenspace of $\mu$ for $\mathbf{B}$. Define the indexing set $X_B$ of row--indices corresponding to some non--zero row of any vector $\mathbf{b}$ in $B$: $X_B = \left\{i \in \left\{1,\dots, D_\lambda + 1\right\} : \exists \mathbf{b} \in B , \mathbf{b}\cdot\mathbf{e}_i \neq 0\right\}$.
By definition of an equitable partition, each index $i$ in $X_B$ corresponds to the block $V_{\lambda, i-1}$ of $\mathcal{V}_\lambda$. As discussed in Remark \ref{eqCVThmRemark} we have that
$\underset{i \in X_B}{{\dot\cup}} V_{\lambda, i-1} \subseteq CV_\mu$
and the first part of the result follows.

Let the basis $B$ be $\{\mathbf{b}_1,\dots, \mathbf{b}_r\}$ and consider the set $B^\prime = \{\mathbf{C}\mathbf{b}_1,\dots, \mathbf{C}\mathbf{b}_r\}$. Since $B$ is an independent set, then $B^\prime$ must also be an independent set. By Remark \ref{eqVectRemark} and Theorem \ref{charPolynDiv}, $B^\prime$ is a basis for some sub--space of the eigenspace of $\mu$ for $\mathbf{U}$. In the case that $m_{\mathbf{U}}(\mu) = m_{\mathbf{B}}(\mu)$, it follows that $B^\prime$ is a basis for the eigenspace of $\mu$ for $\mathbf{U}$. Since every vertex in $CV_\mu$ must correspond to a non--zero entry for some eigenvector in $B^\prime$, it follows that $\underset{i \in X_B}{{\dot\cup}} V_{\lambda, i-1} = CV_\mu$. Let $X_B = X^0_B$. For $d \in \mathbb{N}$, recursively define,
\vspace{-3mm}
\begin{equation*}
X^d_B = \left\{i \in \{1,\dots, D_\lambda + 1\} : \exists j \in X^{d-1}_B, i = j \pm 1\right\} \backslash \bigcup\limits_{k=0}^{d-1} X^k_B
\end{equation*}
which is the set of indices, not already appearing in some $X^j_B$ for $0 \leq j < d$, of blocks whose vertices are at a distance 1 from the vertices in the blocks with indices in $X^{d-1}_B$. Hence the union of the blocks $V_{\lambda, i-1}$ with indices in $X^{d}_B$ gives the vertices at a distance $d$ from $CV_\mu$. Moreover, $\exists D_\mu \in \mathbb{N}$ such that $X_B^{D_\mu} \neq \emptyset$ and for every $d > D_\mu$, $X_B^d = \emptyset$. Hence $\mathcal{V}_\mu$ can be constructed from the indexing sets $X^0_B,\dots, X^{D_\mu}_B$.
\end{proof}

\subsubsection{Worked Example} \label{workedExample}

Consider once again the cubic graph $G$ in Figure \ref{fig:cubicExample}. The $1$--CDP is also an equitable partition, having the following divisor and characteristic matrices $\mathbf{B}$ and $\mathbf{C}$, respectively, 
\begin{eqnarray*}
\mathbf{B} = \left(\begin{array}{ccc}
     2 & 1 & 0 \\
     2 & 0 & 1 \\
     0 & 1 & 2
\end{array}\right)
&
\mathbf{C} = \left(
\begin{array}{cccccccccccc}
 1 & 1 & 1 & 1 & 1 & 1 & 0 & 0 & 0 & 0 & 0 & 0 \\
 0 & 0 & 0 & 0 & 0 & 0 & 1 & 1 & 1 & 0 & 0 & 0 \\
 0 & 0 & 0 & 0 & 0 & 0 & 0 & 0 & 0 & 1 & 1 & 1 \\
\end{array}
\right)^\intercal.
\end{eqnarray*}
The spectrum of $\mathbf{B}$ is $\{3^{(1)}, 2^{(1)}, -1^{(1)}\}$. Consider $\mu = 2$, which is in both ${\rm spec}(\mathbf{A})$ and ${\rm spec}(\mathbf{B})$, with the same multiplicity (equal to 1). The basis of the eigenspace for $2$ of $\mathbf{B}$ is $\left\{(-1, 0, 2)^\intercal\right\}$, which is non--zero with respect to each block of $\mathcal{V}_1$. On the other hand, the eigenspace for $2$ of $\mathbf{A}$ is spanned by a single vector, $\left(-1, -1, -1, -1, -1, -1, 0, 0, 0, 2, 2, 2\right)^\intercal$. In this case, $CV_2$ is the disjoint union of the blocks $V_{1, 0}$ and $V_{1, 2}$ from $\mathcal{V}_1$. Moreover, $\mathcal{V}_2$ can be constructed from $\mathcal{V}_1$:  $\mathcal{V}_2 \ = \left\{V_{2, 0} = V_{1,0} \ \dot\cup \ V_{1, 2}, \ V_{2, 1} = V_{1, 1}\right\}$.

\section{Information Content of Graphs and Topological Indices}

\subsection{Information Content and Structure of $\lambda$--Core Distance Partitions}

The study of the information content of graphs is an important and active field, which is at the intersection of graph and information theory. The notion of information content of graphs was first introduced by \cite{Rashevsky, Trucco}, and was generalised and formalised by \cite{Mowshowitz} as follows: Given a graph $G$ with a vertex set $V$ and a partition of $V$ into equivalence classes, a finite probability scheme can be assigned to this partitioning, by assigning the probability $\frac{\left|V_i\right|}{n}$ to each equivalence class $V_i$. An entropic measure associated with this finite probability scheme could then be defined as $I = -\sum_{i = 0}^k\frac{\left|V_i\right|}{n}\log\left(\frac{\left|V_i\right|}{n}\right)$, which is based on Shannon's Entropy. Such measures form part of a broader class of indices, known as \textit{topological indices}. Note that the logarithms we consider are typically of base 2, $e$ or 10, all of which are monotonically increasing.

Since the orbital partition $\Pi$ is in fact a collection of equivalence classes, Mowshowitz studied the entropy measure $I_a$ associated with $\Pi$: $I_a = -\sum_{i = 0}^s\frac{\left|\pi_i\right|}{n}\log\left(\frac{\left|\pi_i\right|}{n}\right)$. Similarly, for $\lambda \in {\rm spec}(\mathbf{U})$, the $\lambda$--CDP $\mathcal{V}_\lambda$ is a collection of equivalence classes, as discussed in Remark \ref{eqrelnRem}, and hence a finite probability scheme may be associated with it. In this manner, we can define a topological index for every distinct eigenvalue of $\mathbf{U}$, related to the structure of the neighbourhoods $V_{\lambda, i}$ of $CV_\lambda$ at a distance $i$, and their size.

\begin{theorem}
    Let $\lambda \in {\rm spec}(\mathbf{U})$ and $\mathcal{V}_\lambda$ be the associated $\lambda$--CDP. Then for each equivalence class $V_{\lambda, i}$ in  $\mathcal{V}_\lambda$, we can associate the probability $\frac{\left|V_{\lambda, i}\right|}{n}$, defining a finite probability scheme with entropy $$I_\lambda = -\sum\limits_{i = 0}^{D_\lambda}\dfrac{\left|V_{\lambda, i}\right|}{n}\log\left(\dfrac{\left|V_{\lambda, i}\right|}{n}\right).$$
\end{theorem}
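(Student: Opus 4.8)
The plan is to verify that the proposed assignment genuinely satisfies the two defining axioms of a finite probability scheme, after which the entropy expression is simply Mowshowitz's construction applied verbatim to the scheme. The entire content of the statement reduces to establishing that $\{|V_{\lambda,i}|/n\}_{i=0}^{D_\lambda}$ is a valid probability distribution; once that is in hand, defining $I_\lambda$ as the associated Shannon entropy is immediate and requires no further argument.

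First I would invoke Proposition \ref{distParn}, which asserts that $\mathcal{V}_\lambda = \{V_{\lambda,0},\dots,V_{\lambda,D_\lambda}\}$ is a partition of $V$. From the partition property the blocks are pairwise disjoint and their union is $V$, so that $\sum_{i=0}^{D_\lambda} |V_{\lambda,i}| = |V| = n$. Dividing through by $n$ yields $\sum_{i=0}^{D_\lambda} |V_{\lambda,i}|/n = 1$, which is the normalisation axiom. Non-negativity is trivial, since each $|V_{\lambda,i}|$ is a cardinality and $n>0$, giving $0 \le |V_{\lambda,i}|/n \le 1$. Together these two facts certify that the assignment $V_{\lambda,i} \mapsto |V_{\lambda,i}|/n$ is a finite probability scheme in the sense required by Mowshowitz's definition, and by Remark \ref{eqrelnRem} the blocks being summed over are precisely the equivalence classes of the relation induced by $d_\lambda$.

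The one point worth checking carefully, rather than a genuine obstacle, is that every block $V_{\lambda,i}$ with $0 \le i \le D_\lambda$ is non-empty, so that the sum ranges over honest equivalence classes and no degenerate $\tfrac{0}{n}\log\tfrac{0}{n}$ terms arise. This follows from connectivity: if $v$ attains $d_\lambda(v)=D_\lambda$, then along a shortest path $v=w_0,w_1,\dots,w_{D_\lambda}=u$ to a core vertex $u$, each intermediate $w_k$ must satisfy $d_\lambda(w_k)=D_\lambda-k$ exactly, since a strict inequality would give a shorter route from $v$ and contradict the maximality of $D_\lambda$. Hence each value $0,1,\dots,D_\lambda$ is realised and every $V_{\lambda,i}$ is non-empty. (Even absent this observation, the usual convention $0\log 0 = 0$ keeps $I_\lambda$ well-defined.)

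With the probability scheme established and its classes confirmed non-empty, the entropy $I_\lambda = -\sum_{i=0}^{D_\lambda}\tfrac{|V_{\lambda,i}|}{n}\log\!\big(\tfrac{|V_{\lambda,i}|}{n}\big)$ is then defined as the Shannon entropy of this scheme, completing the argument. I expect the proof to be short and essentially bookkeeping; the only substantive ingredient is the partition property already secured in Proposition \ref{distParn}, and the subtlest line is the connectivity argument for non-emptiness of the intermediate blocks.
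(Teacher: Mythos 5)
Your proposal is correct and follows essentially the same route as the paper, which states this result without a separate proof and relies precisely on Proposition \ref{distParn} and Remark \ref{eqrelnRem} to conclude that the blocks form a partition into equivalence classes to which Mowshowitz's construction applies. Your additional connectivity argument showing each $V_{\lambda,i}$ for $0 \leq i \leq D_\lambda$ is non-empty is a small, sound refinement the paper leaves implicit.
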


\subsection{Relating $I_\lambda$ and $I_\mu$ for Two Distinct Eigenvalues}

Consider a value $x_i$ associated with each block $V_{\lambda, i}$ in some $\lambda$--CDP. Then, $W_\lambda (x_0,\dots, x_{D_\lambda}) = \frac{1}{n}\sum_{i = 0}^{D_\lambda} \left|V_{\lambda, i}\right| x_i$
is the arithmetic mean of these values, weighted by the size of each block. For brevity, we write $W_\lambda\left(\{x_i\}\right)$, where $0 \leq i \leq D_\lambda$. Let $\mathcal{P}$ be a partition of $V$ such that each block $V_{\lambda, i}$ in $\mathcal{V}_\lambda$ is the disjoint union of $k_i \in \mathbb{N}$ blocks in $\mathcal{P}$.

It can be shown that $-\sum_{i = 1}^{|\mathcal{P}|} \frac{|P_i|}{n} \log\left(\frac{|P_i|}{n}\right)$ is bounded above by $I_\lambda + W_\lambda\left(\{\log(k_i)\}\right)$. We first require the following well--known inequality in information theory.

\begin{theorem}[Theorem 2.7.1, \cite{CoThom}] \label{logsum}
    For non--negative numbers $x_1, \dots, x_n$ and $y_1, \dots, y_n$, $$\sum_{k=1}^n x_k \log\left(\dfrac{x_k}{y_k}\right) \geq \left(\sum_{k=1}^n x_k\right)\log\left(\dfrac{\sum_{k=1}^n x_k}{\sum_{k=1}^n y_k}\right)$$ with equality if and only if $\frac{x_k}{y_k} = {\rm constant}$.
\end{theorem}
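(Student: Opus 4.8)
The plan is to derive this as a consequence of the strict convexity of the function $f(t) = t\log t$ on $(0,\infty)$ together with Jensen's inequality. First I would record the convexity: because the paper works throughout with logarithms of base $2$, $e$, or $10$, all exceeding $1$, a direct computation gives $f''(t) = \tfrac{1}{t\ln b} > 0$, so $f$ is strictly convex. The strictness is exactly what will later pin down the equality clause, so I would keep track of it from the outset.

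Before applying Jensen I would dispose of the degenerate terms using the standard conventions $0\log 0 = 0$, $\,0\log(0/0)=0$, and $a\log(a/0) = +\infty$ for $a>0$. If some $y_k=0$ with $x_k>0$, the left-hand side is $+\infty$ and the inequality holds trivially; if $\sum_k x_k = 0$ then every $x_k=0$ and both sides vanish by convention. In the remaining case $\sum_k x_k > 0$, any index with $x_k=0$ contributes $0$ on the left while its positive $y_k$ only enlarges $\sum_k y_k$, and since $\log$ is increasing this can only lower the right-hand side; such terms may therefore be discarded. This reduces everything to the case $x_k, y_k > 0$ for all $k$, with $Y := \sum_k y_k > 0$.

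The core step is then to rewrite the left-hand side so that Jensen applies. Introducing the weights $\alpha_k = y_k/Y$, which are non-negative and sum to $1$, I would observe that
$$\sum_{k=1}^n x_k \log\frac{x_k}{y_k} = \sum_{k=1}^n y_k\, f\!\left(\frac{x_k}{y_k}\right) = Y\sum_{k=1}^n \alpha_k\, f\!\left(\frac{x_k}{y_k}\right) \geq Y\, f\!\left(\sum_{k=1}^n \alpha_k \frac{x_k}{y_k}\right),$$
the final inequality being Jensen's inequality for the convex $f$. Since $\sum_k \alpha_k (x_k/y_k) = \big(\sum_k x_k\big)/Y$, substituting back and expanding $f$ produces exactly $\big(\sum_k x_k\big)\log\big((\sum_k x_k)/Y\big)$, which is the asserted bound.

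Finally, the equality condition falls out of the strictness established at the start: Jensen's inequality for a strictly convex function is an equality precisely when the arguments $x_k/y_k$ agree across all indices carrying positive weight, that is, when $x_k/y_k$ is constant. I expect the genuinely delicate part to be not the analytic estimate—that is the one-line convexity check feeding into Jensen—but the careful bookkeeping of the degenerate cases, where one must confirm that dropping null terms and invoking the monotonicity of $\log$ really does preserve the inequality in the correct direction.
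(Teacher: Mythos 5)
Your proof is correct: the paper itself offers no proof of this statement, quoting it directly as Theorem 2.7.1 of \cite{CoThom}, and your argument---strict convexity of $t\log t$ fed into Jensen's inequality with weights $y_k/\sum_j y_j$, plus the $0\log 0$ conventions for the degenerate cases---is precisely the canonical proof given in that reference. Nothing further is needed, though for the equality clause you could note explicitly that equality forces the discarded terms (those with $x_k=0$, $y_k>0$) to be absent, since dropping them strictly decreases the right-hand side when $\sum_k x_k>0$.
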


\begin{proposition}
    Let $\lambda \in {\rm spec}(\mathbf{U})$ and $\mathcal{V}_\lambda$ be the associated $\lambda$--CDP. Let $\mathcal{P}$ be a partition of $V$ such that each block $V_{\lambda, i}$ of $\mathcal{V}_\lambda$ is the disjoint union of $k_i \in \mathbb{N}$ blocks of $\mathcal{P}$. Then, 
    \begin{equation}
        -\sum_{i = 1}^{|\mathcal{P}|} \frac{|P_i|}{n} \log\left(\frac{|P_i|}{n}\right) \leq I_\lambda + W_\lambda\left(\{\log(k_i)\}\right)
    \end{equation}
\end{proposition}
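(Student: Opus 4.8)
The plan is to recognise the left-hand side as the Shannon entropy of the finer partition $\mathcal{P}$ and to bound it block-by-block against the corresponding contribution of $\mathcal{V}_\lambda$ via the log-sum inequality of Theorem \ref{logsum}. First I would adapt the notation to the refinement: for each $0 \leq i \leq D_\lambda$, write the $k_i$ blocks of $\mathcal{P}$ contained in $V_{\lambda, i}$ as $P_{i,1}, \dots, P_{i,k_i}$, so that $\sum_{j=1}^{k_i} |P_{i,j}| = |V_{\lambda, i}|$ and the left-hand side becomes the double sum $-\sum_{i=0}^{D_\lambda} \sum_{j=1}^{k_i} \frac{|P_{i,j}|}{n}\log\left(\frac{|P_{i,j}|}{n}\right)$.

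The key step is to apply Theorem \ref{logsum} separately on each block $V_{\lambda, i}$. The part I expect to be the crux is the choice of the $y_k$: taking $x_j = \frac{|P_{i,j}|}{n}$ and $y_j = \frac{|V_{\lambda, i}|}{n k_i}$ (the uniform split of the block's mass across its $k_i$ refinements) makes both totals equal, $\sum_j x_j = \sum_j y_j = \frac{|V_{\lambda, i}|}{n}$, so that the right-hand side of the log-sum inequality collapses to zero. Everything else is routine once this substitution is in place.

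With that choice, the resulting inequality $\sum_{j} \frac{|P_{i,j}|}{n}\log\left(\frac{|P_{i,j}| k_i}{|V_{\lambda, i}|}\right) \geq 0$ rearranges, after expanding the logarithm and using $\sum_j |P_{i,j}| = |V_{\lambda, i}|$, into the per-block bound $-\sum_{j} \frac{|P_{i,j}|}{n}\log\left(\frac{|P_{i,j}|}{n}\right) \leq -\frac{|V_{\lambda, i}|}{n}\log\left(\frac{|V_{\lambda, i}|}{n}\right) + \frac{|V_{\lambda, i}|}{n}\log(k_i)$. Finally I would sum these bounds over all $i$ from $0$ to $D_\lambda$: the left-hand side reassembles the entropy of $\mathcal{P}$, the first term on the right sums to $I_\lambda$ by definition, and the second sums to $\frac{1}{n}\sum_{i=0}^{D_\lambda}|V_{\lambda,i}|\log(k_i) = W_\lambda(\{\log(k_i)\})$, which is exactly the claimed inequality. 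Since the bulk of the argument is the algebraic rearrangement, the only genuinely creative move is the uniform choice of $y_j$; the equality condition of Theorem \ref{logsum} then tells us that equality holds precisely when each ratio $\frac{x_j}{y_j}$ is constant on every block, i.e. when $\mathcal{P}$ splits each $V_{\lambda,i}$ into equally-sized pieces.
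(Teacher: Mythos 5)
Your proof is correct and follows essentially the same route as the paper: decompose each $V_{\lambda,i}$ into its $k_i$ sub-blocks, apply the log-sum inequality (Theorem \ref{logsum}) block-by-block with a constant choice of $y_j$, and sum the resulting per-block bounds. The only difference is cosmetic --- the paper takes $y_k = n$ while you take the uniform split $y_j = \tfrac{|V_{\lambda,i}|}{nk_i}$, which are equivalent applications of the inequality since only the constancy of the $y_j$ within a block matters; your added observation on the equality case is also correct.
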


\begin{proof}
    Let $V_{\lambda, i}$ be the disjoint union of $k_i \in \mathbb{N}$ blocks $\left\{P^{(i)}_1,\dots, P^{(i)}_{k_i}\right\}$ in $\mathcal{P}$. Then,
    \begin{align*}
        &\phantom{{}=1} \dfrac{1}{n}\sum_{k = 1}^{k_i} \left|P_k^{(i)}\right|\log\left(\tfrac{\left|P_k^{(i)}\right|}{n}\right) && \text{since} \ V_{\lambda, i} \ \text{is the union of} \left\{P^{(i)}_1,\dots, P^{(i)}_{k_i}\right\}, \\
        &\geq \dfrac{1}{n}\left(\sum_{k = 1}^{k_i} \left|P_k^{(i)}\right|\right) \log\left(\tfrac{\sum_{k = 1}^{k_i} \left|P_k^{(i)}\right|}{n k_i}\right) && \text{by Theorem \ref{logsum} for} \ x_k = \left|P_k^{(i)}\right|, y_k = n, \\
        &= \tfrac{|V_{\lambda, i}|}{n}\left[\log\left(\tfrac{|V_{\lambda, i}|}{n}\right) - \log(k_i)\right] && \text{by properties of logs and} \ |V_{\lambda, i}|=\!\sum_{k = 1}^{k_i} \left|P_k^{(i)}\right|.
    \end{align*}
    Applying this inequality to each term in $I_\lambda$, the bound follows immediately.
\end{proof}

Such an inequality is closely associated with Problem \ref{constProb}. For two distinct eigenvalues $\lambda, \mu \in {\rm spec}(\mathbf{U})$, such that the $\mu$--CDP can be constructed from the $\lambda$--CDP, the following result follows immediately. 

\begin{proposition}
    Let $\lambda, \mu \in {\rm spec}(\mathbf{U})$ be two distinct eigenvalues. If the $\mu$--CDP can be constructed from the $\lambda$--CDP, then there exist $|D_\mu| + 1$ positive integers $k_i$ such that $I_\lambda \leq I_\mu + W_\mu\left(\{\log(k_i)\}\right)$.
\end{proposition}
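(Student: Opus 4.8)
The plan is to apply the preceding proposition directly, with the roles of the two partitions interchanged. First I would unwind the definition of ``constructed from'': saying that the $\mu$--CDP can be constructed from the $\lambda$--CDP means precisely that every block $V_{\mu, i}$ of $\mathcal{V}_\mu$ is a disjoint union of blocks of $\mathcal{V}_\lambda$; in other words, $\mathcal{V}_\lambda$ is a refinement of $\mathcal{V}_\mu$. For each index $0 \leq i \leq D_\mu$, let $k_i \in \mathbb{N}$ denote the number of blocks of $\mathcal{V}_\lambda$ whose disjoint union is $V_{\mu, i}$. Since $\mathcal{V}_\mu$ has exactly $D_\mu + 1$ blocks, this produces the claimed $|D_\mu| + 1$ positive integers.

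Next I would invoke the previous proposition with $\mathcal{V}_\mu$ playing the role of the coarse partition (the $\mathcal{V}_\lambda$ appearing there) and with $\mathcal{V}_\lambda$ playing the role of the refining partition $\mathcal{P}$. The hypotheses match exactly: each block $V_{\mu, i}$ of $\mathcal{V}_\mu$ is the disjoint union of $k_i$ blocks of $\mathcal{V}_\lambda$. The conclusion of that proposition then reads
$$
-\sum_{j} \frac{|V_{\lambda, j}|}{n} \log\left(\frac{|V_{\lambda, j}|}{n}\right) \leq I_\mu + W_\mu\left(\{\log(k_i)\}\right).
$$
Finally I would observe that the left-hand side is, by definition, nothing other than the entropy $I_\lambda$ of the $\lambda$--CDP, so substituting yields $I_\lambda \leq I_\mu + W_\mu\left(\{\log(k_i)\}\right)$, as required.

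There is no genuine obstacle here; the statement is an immediate corollary of the preceding inequality. The only point that warrants a moment's care is to match the refinement direction correctly — the \emph{coarser} $\mu$--CDP takes the role of $\mathcal{V}_\lambda$ in the earlier statement, while the \emph{finer} $\lambda$--CDP supplies the partition $\mathcal{P}$ — together with the bookkeeping observation that the number of integers $k_i$ is governed by the number of blocks of the coarse partition, namely $D_\mu + 1$.
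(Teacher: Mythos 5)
Your argument is correct and is exactly the route the paper intends: the paper gives no separate proof, stating only that the result ``follows immediately'' from the preceding proposition, and your instantiation --- taking $\mathcal{V}_\mu$ as the coarse partition and $\mathcal{V}_\lambda$ as the refining partition $\mathcal{P}$, so that the left-hand side of the earlier inequality becomes $I_\lambda$ --- is precisely that deduction. Your care in checking the refinement direction and in counting the $D_\mu + 1$ integers $k_i$ is well placed and fills in the details the paper leaves implicit.
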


In particular, as an immediate consequence of Theorem \ref{eqCVThm}, we have the following corollary. 

\begin{corollary}
    Let $\lambda \in {\rm spec}(\mathbf{U})$ and $\mathcal{V}_\lambda$ be an equitable partition with divisor and characteristic matrices $\mathbf{B}$ and $\mathbf{C}$, respectively. Let $\mu \in {\rm spec}(\mathbf{B})$ such that $m_{\mathbf{U}}(\mu) = m_{\mathbf{B}}(\mu)$. Then $I_\lambda \leq I_\mu + W_\mu\left(\{\log(k_i)\}\right)$.
\end{corollary}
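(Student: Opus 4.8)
The plan is to obtain this purely by chaining two results already established, so that the Corollary is indeed ``immediate'' and the only real work is matching each partition to its correct role. First I would observe that the hypotheses are exactly those of Theorem \ref{eqCVThm}: $\mathcal{V}_\lambda$ is equitable with divisor matrix $\mathbf{B}$, and $m_{\mathbf{U}}(\mu) = m_{\mathbf{B}}(\mu)$. Before invoking it I would note that $\mu \in {\rm spec}(\mathbf{B})$ forces $\mu \in {\rm spec}(\mathbf{U})$ by Theorem \ref{charPolynDiv} (the characteristic polynomial of $\mathbf{B}$ divides that of $\mathbf{U}$), so that the $\mu$--CDP $\mathcal{V}_\mu$ and its entropy $I_\mu$ are genuinely defined.

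With that in hand, Theorem \ref{eqCVThm} yields that $\mathcal{V}_\mu$ can be constructed from $\mathcal{V}_\lambda$; unpacking the definition, each block $V_{\mu, i}$ for $0 \leq i \leq D_\mu$ is a disjoint union of blocks of $\mathcal{V}_\lambda$. Writing $k_i$ for the number of $\mathcal{V}_\lambda$--blocks composing $V_{\mu, i}$ exhibits $\mathcal{V}_\lambda$ as a refinement of $\mathcal{V}_\mu$ and supplies the $D_\mu + 1$ positive integers $k_i$. I would then apply the preceding refinement inequality with the \emph{coarse} partition taken to be $\mathcal{V}_\mu$ and the refining partition $\mathcal{P}$ taken to be $\mathcal{V}_\lambda$: its hypothesis, that each coarse block splits into $k_i$ finer blocks, is precisely the refinement just produced, and its conclusion reads $-\sum_{P \in \mathcal{P}} \tfrac{|P|}{n}\log\!\left(\tfrac{|P|}{n}\right) \leq I_\mu + W_\mu\!\left(\{\log(k_i)\}\right)$. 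Since $\mathcal{P} = \mathcal{V}_\lambda$, the left-hand side is exactly $I_\lambda$, and the stated bound follows. (Equivalently, one may cite the immediately preceding Proposition directly, since it already packages ``constructibility $\Rightarrow$ inequality.'')

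The one point that genuinely needs care -- the ``obstacle,'' such as it is -- is the bookkeeping of which eigenvalue plays which role. The Corollary places $I_\lambda$ on the \emph{smaller} side and $I_\mu$ on the larger side, which can look backwards given that we start from the $\lambda$--CDP; it is correct precisely because $\mathcal{V}_\mu$ is the coarser partition, its blocks being unions of $\lambda$--CDP blocks, and a finer partition carries at least as much entropy, with the excess bounded by the size-weighted average $W_\mu\!\left(\{\log(k_i)\}\right)$ of the splitting logarithms. I would therefore make explicit that substituting $\mu$ into the role of $\lambda$ in the refinement inequality is legitimate -- it requires only that $\mathcal{V}_\mu$ be an honest $\mu$--CDP, which holds since $\mu \in {\rm spec}(\mathbf{U})$. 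No new estimate beyond the log-sum inequality (Theorem \ref{logsum}) already invoked in that Proposition is required, so there is no deeper analytic content to supply.
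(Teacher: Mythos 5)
Your proposal is correct and follows exactly the route the paper intends: the paper gives no explicit proof, presenting the corollary as an immediate consequence of Theorem \ref{eqCVThm} (which supplies constructibility of $\mathcal{V}_\mu$ from $\mathcal{V}_\lambda$) combined with the preceding refinement proposition applied with $\mu$ in the role of the coarse partition's eigenvalue and $\mathcal{P} = \mathcal{V}_\lambda$. Your explicit attention to the role-swap of $\lambda$ and $\mu$, and to the fact that $\mu \in {\rm spec}(\mathbf{U})$ via Theorem \ref{charPolynDiv}, only makes the ``immediate'' chain more transparent.
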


Consider the $1$--CDP and $2$--CDP for the adjacency matrix $\mathbf{A}$ of the graph $G$ in Figure \ref{fig:cubicExample}. As in the worked example of Section \ref{workedExample}, the $2$--CDP can be constructed from the $1$--CDP. In particular, the $2$--CDP has two blocks $V_{2, 0}$ and $V_{2, 1}$, which are the disjoint union of $k_0 = 2$ and $k_1 = 1$ blocks from the $1$--CDP, respectively. Up to 4 decimal places, we have that $W_2\left(\ln(2), \ln(1)\right) \simeq 0.5199$ and $I_2 \simeq 0.5623$. Moreover, $I_1 \simeq 1.0397$ and hence $I_1 \leq I_2 + W_2\left(\ln(2), \ln(1)\right) \simeq 1.0822$. 

\subsection{Note on the Minimum--Maximum Attainable Bounds of $I_\lambda$}

Of interest are the minimum and maximum bounds attainable by $I_\lambda$ for a fixed number of vertices $n$, and varying number of $\lambda$--core vertices. These bounds are given in the following theorem, and visualised in Figure \ref{fig:entropyPlt}.

\begin{proposition} \label{minmaxBdds}
    Let $\lambda \in {\rm spec}(\mathbf{U})$ and let $|CV_\lambda| = k, 1 \leq k \leq n$. Then,
    $${\left.\begin{array}{cr}
        0 & k = n \\
        -\tfrac{n-k}{n}\log\left(\tfrac{n-k}{n}\right) & k < n
    \end{array}\right\}}
    \leq I_\lambda + \dfrac{k}{n}\log\left(\dfrac{k}{n}\right) \leq -\dfrac{n-k}{n}\log\left(\dfrac{1}{n}\right)$$
\end{proposition}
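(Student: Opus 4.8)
The plan is to first rewrite the central quantity so that the core block drops out of the sum. Since $|V_{\lambda,0}| = |CV_\lambda| = k$, the $i=0$ term of $I_\lambda$ is exactly $-\frac{k}{n}\log\left(\frac{k}{n}\right)$, so
\[
I_\lambda + \frac{k}{n}\log\left(\frac{k}{n}\right) = -\sum_{i=1}^{D_\lambda}\frac{|V_{\lambda,i}|}{n}\log\left(\frac{|V_{\lambda,i}|}{n}\right),
\]
which we denote by $S$. The task thus reduces to bounding the entropic contribution $S$ of the non-core blocks, subject to the two structural facts that $\sum_{i=1}^{D_\lambda}|V_{\lambda,i}| = n-k$ and that every block is non-empty, i.e. $|V_{\lambda,i}| \geq 1$.

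For the degenerate case $k=n$ I would note that then $CV_\lambda = V$, so $D_\lambda = 0$, the sum defining $S$ is empty and $S=0$; since the stated upper bound $-\frac{n-k}{n}\log\left(\frac{1}{n}\right)$ also vanishes, both inequalities hold trivially and the two cases of the lower bound are reconciled under the convention $0\log 0 = 0$.

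For $k < n$ I would establish the two bounds separately by elementary monotonicity of the logarithm, keeping in mind that every argument $\frac{|V_{\lambda,i}|}{n}$ lies in $(0,1)$ so its logarithm is negative. For the upper bound, write each summand as $\frac{|V_{\lambda,i}|}{n}\log n - \frac{|V_{\lambda,i}|}{n}\log|V_{\lambda,i}|$; since $|V_{\lambda,i}| \geq 1$ the subtracted term is non-negative, so each summand is at most $\frac{|V_{\lambda,i}|}{n}\log n$, and summing over $i$ gives $S \leq \frac{n-k}{n}\log n = -\frac{n-k}{n}\log\left(\frac{1}{n}\right)$, with equality precisely when every non-core block is a singleton. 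For the lower bound, observe that $|V_{\lambda,i}| \leq n-k$ forces $\frac{|V_{\lambda,i}|}{n} \leq \frac{n-k}{n}$ and hence $\log\left(\frac{|V_{\lambda,i}|}{n}\right) \leq \log\left(\frac{n-k}{n}\right)$; multiplying by the positive weight $\frac{|V_{\lambda,i}|}{n}$ and summing yields $\sum_i \frac{|V_{\lambda,i}|}{n}\log\left(\frac{|V_{\lambda,i}|}{n}\right) \leq \frac{n-k}{n}\log\left(\frac{n-k}{n}\right)$, that is $S \geq -\frac{n-k}{n}\log\left(\frac{n-k}{n}\right)$, with equality when a single non-core block carries all $n-k$ vertices. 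The lower bound can alternatively be obtained from the log-sum inequality, Theorem~\ref{logsum}, taking $x_i = \frac{|V_{\lambda,i}|}{n}$ against a constant $y_i$, which would match the style of the preceding proposition.

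There is no genuinely hard step here; once $S$ is isolated the argument is a pair of one-line estimates. The only points requiring care are bookkeeping ones: tracking that all logarithms have arguments below $1$, so that the inequality directions are preserved when multiplying through by the non-negative weights $\frac{|V_{\lambda,i}|}{n}$, and correctly disposing of the boundary case $k=n$ where the sum is empty.
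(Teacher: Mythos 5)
Your proof is correct and follows essentially the same route as the paper: isolate the non--core contribution $S = I_\lambda + \tfrac{k}{n}\log\left(\tfrac{k}{n}\right)$ and bound each term by monotonicity of the logarithm, using $1 \leq |V_{\lambda,i}| \leq n-k$ together with $\sum_{i\geq 1}|V_{\lambda,i}| = n-k$, with the $k=n$ case handled as a trivial empty sum. The only cosmetic difference is that the paper writes the sum with an inner index $j$ running over the $|V_{\lambda,i}|$ vertices of each block rather than weighting by $\tfrac{|V_{\lambda,i}|}{n}$ directly; your added equality conditions are a harmless bonus.
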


\begin{proof}
    Let $|CV_\lambda| = k$, where $1 \leq k < n$. Then $1 \leq D_\lambda \leq n-k$. Consider,
    \begin{equation*}
        I_\lambda + \tfrac{k}{n} \log\left(\tfrac{k}{n}\right) = -\sum_{i=1}^{D_\lambda} \sum_{j=1}^{|V_{\lambda, i}|} \tfrac{1}{n}\log\left(\tfrac{|V_{\lambda, i}|}{n}\right) \leq -\sum_{i=1}^{D_\lambda} \sum_{j=1}^{|V_{\lambda, i}|} \tfrac{1}{n}\log\left(\tfrac{1}{n}\right)
    \end{equation*}
    since $-\log(x)$ is monotonically decreasing and $1 \leq |V_{\lambda, i}|$. Since $n-k = \sum_{i=1}^{D_\lambda} |V_{\lambda, i}|$, the upper--bound $I_\lambda + \tfrac{k}{n} \log\left(\tfrac{k}{n}\right) \leq \tfrac{n-k}{n}\log\left(\tfrac{1}{n}\right)$ follows. For $1 \leq k < n$, by a similar argument we have $-\tfrac{n-k}{n}\log\left(\tfrac{n-k}{n}\right) \leq I_\lambda + \tfrac{k}{n}\log\left(\tfrac{k}{n}\right)$. For the case that $k = n$, then $I_\lambda = \tfrac{n}{n}\log\left(\tfrac{n}{n}\right) = 0$, completing the proof.
\end{proof}

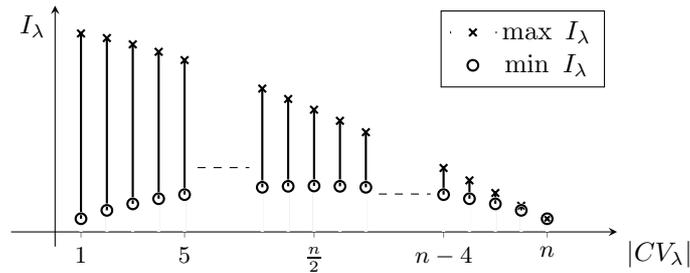
\begin{figure}[ht!]
    \centering
     \pgfplotsset{
    standard/.style={%Axis format configuration
        axis x line=middle,
        axis y line=middle,
        enlarge x limits=0.15,
        enlarge y limits=0.15,
        every axis x label/.style={at={(current axis.right of origin)},anchor=north west},
        every axis y label/.style={at={(current axis.above origin)},anchor=north east},
        every axis plot post/.style={mark options={fill=white}},
        every tick label/.append style={font=\small}
        }
    }

\begin{tikzpicture}
    \begin{axis}[%
            standard,
            samples = 5,
            xlabel={$|CV_\lambda|$},
            ylabel={$I_\lambda$},
            ytick = \empty,
            xtick = {1,5,10,15,19},
            xticklabels = {$1$,$5$,$\frac{n}{2}$,$n-4$,$n$},
            width=0.6\textwidth,
            height=0.3\textwidth]
                
            \addplot[mark=x,ycomb,black,thick,domain=1:5] {-(x/20)*log2(x/20)-((20-x)/20)*log2((1/20))};
            \addlegendentry{$\max \ I_\lambda$};
            \addplot[mark=o,ycomb,white,very thick,domain=1:5,forget plot] {-(x/20)*log2(x/20)-((20-x)/20)*log2(((20-x)/20))};
            \addplot[mark=o, only marks,black,thick,domain=1:5] {-(x/20)*log2(x/20)-((20-x)/20)*log2(((20-x)/20))};
            \addlegendentry{$\min \ I_\lambda$};
                
            \addplot [domain=5.5:7.5,dashed,black]{1.4};
                
            \addplot[mark=x,ycomb,black,thick,domain=8:12] {-(x/20)*log2(x/20)-((20-x)/20)*log2((1/20))};
            \addplot[mark=o,ycomb,white,very thick,domain=8:12] {-(x/20)*log2(x/20)-((20-x)/20)*log2(((20-x)/20))};
            \addplot[mark=o, only marks,black,thick,domain=8:12] {-(x/20)*log2(x/20)-((20-x)/20)*log2(((20-x)/20))};
                
            \addplot [domain=12.5:14.5,dashed,black]{0.825};
                
            \addplot[mark=x,ycomb,black,thick,domain=15:19] {-(x/20)*log2(x/20)-((20-x)/20)*log2((1/20))};
			\addplot[mark=o,ycomb,white,very thick,domain=15:19] {-(x/20)*log2(x/20)-((20-x)/20)*log2(((20-x)/20))};
			\addplot[mark=o, only marks,black,thick,domain=15:19] {-(x/20)*log2(x/20)-((20-x)/20)*log2(((20-x)/20))};
    \end{axis}
\end{tikzpicture}
    \caption{Discrete plot of the range of $I_\lambda$ for fixed $n$ and varying $|CV_\lambda|$, between the Minimum--Maximum bounds in Proposition \ref{minmaxBdds}. For $n$ even (as shown here), the maximum lower--bound occurs at $n/2$.}
    \label{fig:entropyPlt}
\end{figure}

\begin{remark} \label{structComm}
    There are a number of observations that one can make on the range of values that $I_\lambda$ may take as the number of $\lambda$--core vertices changes and $n$ remains fixed. Firstly, the lower--bound is attained when the number of blocks in $\mathcal{V}_\lambda$ is exactly two, while the upper--bound is attained when the number of blocks is $n - |CV_\lambda| + 1$. Hence for fixed $n$ and $|CV_\lambda|$, the entropy $I_\lambda$ increases as $D_\lambda$ increases. Secondly, the maximum lower--bound is attained when $\mathcal{V}_\lambda$ has two blocks of more or less equal size. Entropy values below this threshold indicate graphs with either very few or almost all vertices being $\lambda$--core. Moreover, $D_\lambda$ must be very small and hence few vertices must be distant from a $\lambda$--core vertex.
\end{remark}

\section{Applications}

For every eigenvalue $\lambda$ of a universal adjacency matrix $\mathbf{U}$ associated with a graph $G$, a notion of information content related to that eigenvalue may be defined, based on the structure of the $\lambda$--CDP. We say that a graph $G$ is \textit{singular} if its adjacency matrix is singular. Partitions of the vertex set based on neighbourhoods of $\lambda$--core vertices have been studied greatly in \cite{ScMfBg, JauMol} for singular graphs with independent $CV_\lambda$. Such partitions, as well as topological indices, have many applications in the study of networks and molecular graphs.

\subsection{Singular Graphs}

In the study of singular graphs, \textit{minimal configurations} (MCs) are of importance, as they are considered to be the building blocks for such graphs, through graph operations such as coalescence. 

\begin{definition}[\cite{SciNullityOne}]
A minimal configuration is a singular graph which is either $K_1$ or if $n \geq 3$ then it has a core--graph $F$ induced by $CV_0$ and a periphery $\mathcal{P}\backslash CV_0$ satisfying the following conditions:
\begin{enumerate}[(i)]
    \item $\eta(G) = 1$,
    \item $\mathcal{P} = \emptyset$ or the graph induced by $\mathcal{P}$ consists of isolated vertices,
    \item $|\mathcal{P}| + 1 = \eta(F)$.
\end{enumerate}
\end{definition}
In particular, MCs are connected (Theorem 2, \cite{SciNullityOne}). Of interest are bipartite MCs, which are closely associated with \textit{slim graphs}. Slim graphs were defined for the case of independent $0$--core vertices in Definition 8 of \cite{ScMfBg}. The following definition generalises the notion of slim graphs to include those graphs for which $CV_0$ is not an independent set.
\begin{definition}
	A connected singular graph $G$ with $0$--core distance partition $\mathcal{V}_0$ for $\mathbf{U} = \mathbf{A}$, is a \textit{slim graph} if $CFV_0$ is precisely $V_{0, 1}$.
\end{definition}
In the case that $CV_0$ is independent, then $CFV_0$ is exactly $N(CV_0)$ for a slim graph. The following proposition characterises such graphs as being those graphs for which the entropic measure $I_0$ is at a minimum, for a given number of vertices $n$ and $\lambda$--core vertices $|CV_0|$.

\begin{proposition} \label{slimEntMin}
    Let $G$ be a connected singular graph. Then, $G$ is a slim graph if and only if $I_0$ is equal to the lower--bound for the given $n$ and $|CV_0|$.
\end{proposition}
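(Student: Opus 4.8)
The plan is to reduce both the slim condition and the equality condition to a single statement about the parameter $D_0$, namely $D_0 \le 1$. First I would note that, by definition of the $0$--CDP, $CFV_0 = \bigcup_{i=1}^{D_0} V_{0,i}$, so the slim condition $CFV_0 = V_{0,1}$ holds exactly when $V_{0,i} = \emptyset$ for all $i \ge 2$, that is, precisely when $D_0 \le 1$. The problem then becomes showing that $I_0$ attains the lower bound of Proposition \ref{minmaxBdds} if and only if $D_0 \le 1$, which recovers both directions of the biconditional at once.

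Next, for $1 \le |CV_0| = k < n$ (so that $D_0 \ge 1$), I would start from the identity already isolated inside the proof of Proposition \ref{minmaxBdds},
$$I_0 + \tfrac{k}{n}\log\tfrac{k}{n} = -\sum_{i=1}^{D_0}\tfrac{|V_{0,i}|}{n}\log\tfrac{|V_{0,i}|}{n},$$
and re-examine the derivation of the lower bound. Since each non-core block satisfies $1 \le |V_{0,i}| \le n-k$ and $-\log$ is decreasing, every term obeys $-\log\tfrac{|V_{0,i}|}{n} \ge -\log\tfrac{n-k}{n}$; summing against the weights $\tfrac{|V_{0,i}|}{n}$, which total $\tfrac{n-k}{n}$, reproduces the lower bound $-\tfrac{n-k}{n}\log\tfrac{n-k}{n}$.

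The crux, and the step I expect to require the most care, is the equality analysis. Equality in the termwise estimate forces $|V_{0,i}| = n-k$ for every index contributing to the sum. I would then argue that this is incompatible with having two or more non-core blocks: the sizes $|V_{0,i}|$ are positive integers summing to $n-k$, so if $D_0 \ge 2$ we would obtain $\sum_{i=1}^{D_0}|V_{0,i}| \ge 2(n-k) > n-k$, a contradiction. Hence equality holds exactly when there is a single non-core block, i.e. $D_0 = 1$; the converse ($D_0 = 1 \Rightarrow$ equality) is immediate. This matches the informal observation in Remark \ref{structComm} that the lower bound is met precisely when $\mathcal{V}_0$ has two blocks.

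Finally I would dispose of the boundary case $k = n$ on its own: here $CV_0 = V$ gives $I_0 = 0$, which is exactly the stated lower bound, while $CFV_0 = \emptyset = V_{0,1}$ makes $G$ (degenerately) slim, so the equivalence persists. Combining the three cases yields the chain ``$G$ slim $\iff D_0 \le 1 \iff I_0$ equals its lower bound,'' which is the assertion.
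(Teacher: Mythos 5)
Your proof is correct and follows essentially the same route as the paper: both hinge on comparing each term $-\log\bigl(|V_{0,i}|/n\bigr)$ with $-\log\bigl((n-k)/n\bigr)$ and using the fact that the non--core block sizes sum to $n-k$. The only cosmetic differences are that you run the converse through the equality condition of the termwise estimate (the paper instead derives a strict inequality from the assumption of at least three blocks, which is the contrapositive of the same argument) and that you treat the $k = n$ case explicitly.
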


\begin{proof}
    If $G$ is a slim graph, then by definition, for $\mathbf{U} = \mathbf{A}$, we have that $\mathcal{V}_0 = \{V_{0,0} = CV_0, V_{0, 1}\}$, where $|V_{0, 1}| = n - |CV_0|$. Therefore by Proposition \ref{minmaxBdds}, $I_0$ is equal to the lower--bound for the given $n$ and $|CV_0|$. Conversely, let $I_0$ be equal to the lower--bound, given $n$ and $|CV_0|$. Suppose that the $0$--CDP for $\mathbf{U} = \mathbf{A}$ has at least three blocks. Then $I_0$ has the term $-\frac{|CV_0|}{n}\log\left(\frac{|CV_0|}{n}\right)$, and all the other terms are of the form $-\frac{n-k}{n}\log\left(\frac{n-k}{n}\right)$, where $|CV_0| < k < n$. Since $-\log(x)$ is strictly monotonically decreasing, then $-\log\left(\frac{n-|CV_0|}{n}\right) < -\log\left(\frac{n-k}{n}\right)$ and hence $I_0$ is greater than the lower--bound, a contradiction. The result follows.
\end{proof}

It was shown by \cite{ScMfBg} that bipartite MCs, which have an independent $0$--core vertex set, are in fact slim graphs with partite sets $V_1 = CV_0$ and $V_2 = N(CV_0)$, such that $|CV_0| = |N(CV_0)| + 1$. The following is an immediate consequence of Proposition \ref{slimEntMin}.

\begin{proposition}
    If $G$ is a bipartite minimal configuration then $I_0$ is equal to the lower--bound for the given $n$ and $|CV_0|$.
\end{proposition}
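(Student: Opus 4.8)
The plan is to derive this as a short chaining of two facts already in place, since the proposition is essentially a corollary. The substantive ingredient, attributed in the text to \cite{ScMfBg}, is that every bipartite minimal configuration is a slim graph: its partite sets are precisely $V_1 = CV_0$ and $V_2 = N(CV_0)$, with $CV_0$ independent and $|CV_0| = |N(CV_0)| + 1$. Consequently the $0$--core distance partition for $\mathbf{U} = \mathbf{A}$ has exactly two blocks, $V_{0,0} = CV_0$ and $V_{0,1} = N(CV_0) = CFV_0$, which is precisely the defining condition for a slim graph.

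First I would verify that the hypotheses of Proposition \ref{slimEntMin} are satisfied. A minimal configuration is by definition a singular graph, and by Theorem 2 of \cite{SciNullityOne} (cited above) it is connected; hence $G$ is a connected singular graph, exactly as required. I would then invoke the forward implication of Proposition \ref{slimEntMin}: since $G$ is a slim graph, $I_0$ equals the lower--bound for the given $n$ and $|CV_0|$, and the result follows.

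I do not expect any real obstacle here, as all the entropy--theoretic work was discharged earlier in Propositions \ref{minmaxBdds} and \ref{slimEntMin}; the only genuine content is the cited structural characterisation of bipartite minimal configurations as slim graphs. The one point worth recording explicitly is that the two--block structure, together with $|CV_0| = |N(CV_0)| + 1$, gives $|V_{0,1}| = n - |CV_0|$, so that the lower--bound expression of Proposition \ref{minmaxBdds} applies verbatim.
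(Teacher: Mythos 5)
Your proposal is correct and follows exactly the route the paper intends: the paper presents this proposition as an immediate consequence of Proposition \ref{slimEntMin}, having just recorded the cited fact that bipartite minimal configurations are slim graphs with $V_1 = CV_0$ and $V_2 = N(CV_0)$. Your additional checks (connectedness via Theorem 2 of the cited source, and the two-block structure giving $|V_{0,1}| = n - |CV_0|$) are sound and merely make explicit what the paper leaves implicit.
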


\subsection{Chemical Considerations}

Let $G$ be the $n$--atomic molecular graph of a conjugated hydrocarbon system. In \textit{H\"uckel Molecular Orbital Theory}, the \textit{Hamiltonian} for such a system is given by $\mathbf{H}(\alpha, \beta) = \alpha \mathbf{I} + \beta \mathbf{A}$, where $\mathbf{A}$ is the adjacency matrix of $G$ and $\alpha, \beta$ are two parameters. Notice that $\mathbf{H}$ may be represented by a universal adjacency matrix, by setting $\gamma_A = \beta$, $\gamma_I = \alpha$, and $\gamma_D = 0 = \gamma_J$. Clearly, $\mathbf{H}(\alpha, \beta)$ and $\mathbf{A}$ commute and hence have the same eigenvectors. If $\lambda_i$ is an eigenvalue in the ordered spectrum of $\mathbf{A}$, then $E_i = \alpha + \beta\lambda_i$ is the corresponding eigenvalue of $\mathbf{H}(\alpha, \beta)$.

Within the H\"uckel framework, $E_i$ corresponds to the $i^{\rm th}$ energy level. Each eigenvector corresponds to one of the $n$--molecular orbitals, where the eigenvector entries give a linear combination of the $n$--atomic orbitals. The normalised values of these entries represent the probability of electron occupation at each atomic orbital, for the $i^{\rm th}$ molecular orbital. For an in--depth treatment of the H\"uckel framework, see \cite{GutPol}. 

Since $\mathbf{H}(\alpha, \beta)$ takes the form of a universal adjacency matrix, then the $E_i$-CDP is defined for every $E_i \in {\rm spec}(\mathbf{H})$. Hence for the $i^{\rm th}$ energy level, we can associate a topological index $I_{E_i}$. Such an index $I_{E_i}$ has appealing interpretations, related to the probabilistic distribution of electrons amongst the atomic orbitals. For example, for large values of $I_{E_i}$, the number of atomic orbitals with non--zero probability of electron occupation is very small, with a relatively large probability of occupation for each. Moreover the corresponding vertices in the molecular graph are very `localised', as there exists a substantial number of $E_i$--core--forbidden vertices which are `distant' from these $E_i$--core vertices. Further interpretations are discussed in Remark \ref{structComm}, in a more general context.

For simplicity, we shall consider $\mathbf{H} = \mathbf{A}$. Singular graphs have an important role in the H\"uckel framework, as the zero energy level $E = 0$, and its multiplicity, correspond to the non--bonding orbitals of the molecule. The $0$--core vertices correspond to the atomic orbitals of which the molecular orbitals for $E = 0$ are a linear combination. Consider the $0$--CDP for $\mathbf{U} = \mathbf{A}$. By Proposition \ref{slimEntMin}, $I_0$ attains a minimum if and only if $G$ is a slim graph. In particular bipartite MCs, which are a sub--class of MCs and the building blocks of singular graphs, always have $I_0$ at a minimum.

\begin{example}
More generally, suppose we have two slim graphs $H_1 = G_1 + v_1$ and $H_2 = G_2 + v_2$, where $v_1$ and $v_2$ are not cut vertices and are both $0$--core vertices. Let $H_1$ have $n_1$ vertices and $k_1 = |CV_0 (H_1)|$. Similarly for $G_2 + v_2$. \cite{AliBaptSci} showed that the coalescence of $H_1$ and $H_2$ on $v_1$ and $v_2$ yields the graph $H_1 \circ H_2 = G_1 + G_2 + v$, where $v$ is also a $0$--core vertex, and the nullity is $\eta(H_1) + \eta(H_2) - 1$. Clearly the $0$--core vertices in $H_1$ which are not $v_1$ are preserved, and similarly for $H_2$. Hence $|CV_0(H_1 \circ H_2)| = k_1 + k_2 - 1$. It is also worth noting that $H_1 \circ H_2$ is also a slim graph and therefore $I_0(H_1 \circ H_2)$ is at a minimum. Without loss of generality, let $\frac{k_1}{n_1} < \frac{k_2 - 1}{n_2 -1}$. Then $\frac{k_1}{n_1} < \frac{k_1 + k_2 - 1}{n_1 + n_2 -1} <\frac{k_2}{n_2}$. It follows that $I_0(H_1 \circ H_2)$ is some value between $I_0(H_1)$ and $I_0(H_2)$, reflecting the `averaged' distribution of $0$--core vertices from $H_1$ and $H_2$ in $H_1 \circ H_2$, both in probabilistic terms for electron occupation, and in terms of the distribution of $0$--core vertices within the molecular graph.
\end{example}

Future directions include the study of changes in $I_0$ as larger molecular graphs are constructed, through \textit{eg.} edge additions or graph coalescence operations. 

\section*{Acknowledgement}
The author is grateful to the anonymous referee for their comments, which have led to an overall improvement in the presentation of this paper.

\bibliography{lambda_dist_domn_BIB}

\end{document}